\newcommand{\NewTheorem}[3]{
	\newaliascnt{#1}{TheoremEnvironment}
	\newtheorem{#1}[#1]{#2}
	\aliascntresetthe{#1}
	\crefname{#1}{#2}{#3}
	\Crefname{#1}{#2}{#3}
}
\theoremstyle{definition}
\theoremstyle{plain}
\crefname{enumi}{}{}
\Crefname{enumi}{}{}
\crefname{enumii}{}{}
\Crefname{enumii}{}{}
\crefname{enumiii}{}{}
\Crefname{enumiii}{}{}
\renewcommand{\p@enumii}{}
\renewcommand{\p@enumiii}{}
\numberwithin{equation}{section}
\crefname{equation}{}{}
\Crefname{equation}{}{}
\newcommand{\SwapSymbols}[1]{
	\expandafter\let\expandafter\temporarysymbol\csname #1\endcsname
	\expandafter\let\csname #1\expandafter\endcsname\csname var#1\endcsname
	\expandafter\let\csname var#1\endcsname\temporarysymbol
}
\newcommand{\bbZ}{\mathbb{Z}}
\newcommand{\cA}{\mathcal{A}}
\newcommand{\cC}{\mathcal{C}}
\newcommand{\cD}{\mathcal{D}}
\newcommand{\cE}{\mathcal{E}}
\newcommand{\cM}{\mathcal{M}}
\newcommand{\cX}{\mathcal{X}}
\newcommand{\cY}{\mathcal{Y}}
\newcommand{\cZ}{\mathcal{Z}}
\let\originalleft\left
\let\originalright\right
\renewcommand{\left}{\mathopen{}\mathclose\bgroup\originalleft}
\renewcommand{\right}{\aftergroup\egroup\originalright}
\newcommand{\isoto}{\xrightarrow{\smash{\raisebox{-0.25em}{$\sim$}}}}
\newcommand{\vin}{\rotatebox[origin=c]{90}{$\in$}}
\newcommand{\set}[2][]{\mathopen{#1\{}#2\mathclose{#1\}}}
\newcommand{\setwithtext}[2][]{\mathopen{#1\{}\,\textnormal{#2}\,\mathclose{#1\}}}
\newcommand{\setwithcondition}[3][]{\mathopen{#1\{}\,#2\mathrel{#1|}#3\,\mathclose{#1\}}}
\newcommand{\op}{\textnormal{op}}
\DeclareMathOperator{\id}{id}
\DeclareMathOperator{\Hom}{Hom}
\DeclareMathOperator{\End}{End}
\DeclareMathOperator{\Ext}{Ext}
\DeclareMathOperator{\Ker}{Ker}
\let\Im\relax
\DeclareMathOperator{\Im}{Im}
\DeclareMathOperator{\Cocont}{Cocont}
\DeclareMathOperator{\Mod}{Mod}
\newcommand{\resp}{resp.\ }
\else\errmessage{Use pdfLaTeX for correct typesetting}\fi
\newcommand{\hideproofs}{
	\let\proof\hide
	\let\endproof\endhide
}
\def\chaptermark#1{}
\def\chapter{%
	\if@openright\cleardoublepage\else\clearpage\fi
	\thispagestyle{plain}\global\@topnum\z@
	\@afterindenttrue \secdef\@chapter\@schapter%
}
\def\@chapter[#1]#2{%
	\refstepcounter{chapter}%
	\ifnum\c@secnumdepth<\z@ \let\@secnumber\@empty
	\else \let\@secnumber\thechapter \fi
	\typeout{\chaptername\space\@secnumber}%
	\def\@toclevel{0}%
	\ifx\chaptername\appendixname \@tocwriteb\tocappendix{chapter}{#2}%
	\else \@tocwriteb\tocchapter{chapter}{#2}\fi
	\chaptermark{#1}%
	\addtocontents{lof}{\protect\addvspace{10\p@}}%
	\addtocontents{lot}{\protect\addvspace{10\p@}}%
	\@makechapterhead{#2}\@afterheading%
}
\def\@schapter#1{%
	\typeout{#1}%
	\let\@secnumber\@empty
	\def\@toclevel{0}%
	\ifx\chaptername\appendixname \@tocwriteb\tocappendix{chapter}{#1}%
	\else \@tocwriteb\tocchapter{chapter}{#1}\fi
	\chaptermark{#1}%
	\addtocontents{lof}{\protect\addvspace{10\p@}}%
	\addtocontents{lot}{\protect\addvspace{10\p@}}%
	\@makeschapterhead{#1}\@afterheading%
}
\newcommand\chaptername{Chapter}
\def\@makechapterhead#1{
	\global\topskip 7.5pc\relax
	\begingroup
	\fontsize{\@xivpt}{18}\bfseries\centering
	\ifnum\c@secnumdepth>\m@ne
	\leavevmode \hskip-\leftskip
	\rlap{\vbox to\z@{\vss
	\centerline{\normalsize\mdseries
	\uppercase\@xp{\chaptername}\enspace\thechapter}
	\vskip 3pc}}\hskip\leftskip\fi
	#1\par \endgroup
	\skip@34\p@ \advance\skip@-\normalbaselineskip
	\vskip\skip@
}
\def\@makeschapterhead#1{
	\global\topskip 7.5pc\relax
	\begingroup
	\fontsize{\@xivpt}{18}\bfseries\centering
	#1\par \endgroup
	\skip@34\p@ \advance\skip@-\normalbaselineskip
	\vskip\skip@
}
\newcounter{chapter}
\newif\if@openright
\title{Module-theoretic approach to dualizable Grothendieck categories}
\author{Ryo Kanda}
\address[Ryo Kanda]{Department of Mathematics, Graduate School of Science, Osaka Metropolitan University, 3-3-138, Sugimoto, Sumiyoshi, Osaka, 558-8585, Japan}
\email{ryo.kanda.math@gmail.com}
\subjclass[2020]{18E10 (Primary), 16D90, 18C35, 18E20 (Secondary)}
\keywords{Dualizability; Grothendieck category; Roos category; Gabriel-Popescu embedding; idempotent ring}
\begin{document}

\begin{abstract}
	We prove that every dualizable Grothendieck category whose dual is again a Grothendieck category satisfies Grothendieck's conditions Ab6 and Ab4*, by taking a module-theoretic approach based on the Gabriel-Popescu embedding. Combining this with a result by Stefanich, we conclude that the class of dualizable linear cocomplete categories is precisely the class of linear Grothendieck category satisfying Ab6 and Ab4*. This provides a complete answer to a modified conjecture on the dualizability, originally posed by Brandenburg, Chirvasitu, and Johnson-Freyd.
\end{abstract}

\maketitle
\tableofcontents

\section{Introduction}
\label{14208425}

In \cite{MR0190207}, Jan-Erik Roos proved the following result:

\begin{theorem}[{\cite{MR0190207}}]\label{62264625}
	Let $\cC$ be a Grothendieck category. Then the following are equivalent:
	\begin{enumerate}
		\item\label{76197744} $\cC$ satisfies Grothendieck's conditions Ab6 and Ab4*.
		\item\label{12719231} There is a pair of a ring $\Lambda$ and an idempotent ideal $I$ of $\Lambda$ such that $\cC$ is the quotient category of $\Mod\Lambda$ by the bilocalizing subcategory $\Mod(\Lambda/I)$.
	\end{enumerate}
\end{theorem}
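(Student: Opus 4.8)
The plan is to prove the two implications separately. The implication $(2) \Rightarrow (1)$ is a transfer argument, while $(1) \Rightarrow (2)$ carries the real content.

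For $(2) \Rightarrow (1)$ I would first record that every module category $\Mod\Lambda$ satisfies Ab6 and Ab4*, and that the idempotence of $I$ is exactly what forces $\cS := \Mod(\Lambda/I) = \setwithcondition{M}{IM = 0}$ to be closed under extensions, hence a Serre subcategory; it is moreover closed under arbitrary products, coproducts, and filtered colimits computed in $\Mod\Lambda$, so it is bilocalizing. Writing $Q\colon \Mod\Lambda \to \cC$ for the exact quotient functor and $S$ for its fully faithful right adjoint section functor (so that $QS \cong \id_\cC$ and $S$ preserves products), I would transfer Ab4* as follows. Given a family of short exact sequences in $\cC$, applying $S$ yields exact sequences $0 \to SX_i' \to SX_i \to SX_i'' \to C_i \to 0$ in $\Mod\Lambda$ whose cokernels satisfy $C_i \in \cS$, since $Q$ carries each of them to $0$. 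Taking products is exact in $\Mod\Lambda$, and closure of $\cS$ under products gives $\prod_i C_i \in \cS$; since $\prod_\cC X_i' \cong Q(\prod_i SX_i')$ and likewise for the other two terms, applying the exact functor $Q$ annihilates $\prod_i C_i$ and produces the desired short exact sequence of products in $\cC$. The verification of Ab6 is parallel, using Ab6 in $\Mod\Lambda$ together with the closure of $\cS$ under both products and filtered colimits.

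For $(1) \Rightarrow (2)$ the natural starting point is the Gabriel-Popescu embedding: fixing a generator $U$ and setting $R = \End_\cC(U)$, the functor $\Hom_\cC(U,-)$ is fully faithful with an exact left adjoint $T$, which identifies $\cC$ with the quotient $\Mod R/\cT$ for the localizing subcategory $\cT = \Ker T$. The obstruction is that $\cT$ need be neither closed under products nor of the form $\Mod(R/I)$; in fact a Grothendieck category can be presented as a localization of a module category in ways for which the kernel fails to be bilocalizing, so no single off-the-shelf presentation will do. The aim is therefore to exploit Ab6 and Ab4* to manufacture a ring $\Lambda$ whose associated localizing subcategory is bilocalizing. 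Concretely, I would pass to a generating set $\set{U_j}$, assemble the hom-data into the idempotent ring $\bigoplus_{j,j'} \Hom_\cC(U_j, U_{j'})$, and take $\Lambda$ to be its unitalization with $I$ the resulting idempotent two-sided ideal. The point of enlarging the ring is to make the local objects so abundant that every object of the kernel arises as a section-cokernel $C_i$ of the kind above; once that is arranged, Ab4* --- which a priori only controls cokernels of maps between local objects --- upgrades to genuine closure of the kernel under products, with Ab6 governing the compatibility with filtered colimits. A localizing subcategory closed under products is simultaneously a hereditary torsion class and a torsion-free class, i.e. the middle term of a TTF triple; by the Jans correspondence these are exactly the subcategories $\Mod(\Lambda/I)$ attached to idempotent two-sided ideals $I$, which delivers the presentation in $(2)$.

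The step I expect to be the main obstacle is precisely this construction of $\Lambda$ and the proof that, for it, Ab6 and Ab4* together force the localizing subcategory to be closed under products. Exactness of products in $\cC$ only constrains cokernels of maps between local objects, not arbitrary families in the kernel, so the conditions have to be brought to bear at the level of the chosen generating set; this is where the combinatorial content of Ab6 --- the interchange of products with filtered colimits --- does the decisive work, and where the argument is most delicate.
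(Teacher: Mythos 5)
Your direction $(2)\Rightarrow(1)$ is a sound outline and matches what the paper invokes in \cref{45309230} via \cite[Theorem~4.21.6]{MR0340375}: the section functor preserves products, products are exact in $\Mod\Lambda$, the cokernels $C_i$ lie in the bilocalizing subcategory $\Mod(\Lambda/I)$ and hence so does $\prod_i C_i$, and the exact quotient functor kills it. The Ab6 transfer is only asserted to be ``parallel,'' and it is in fact the less routine of the two (one must compare lattices of subobjects across the localization), but the strategy is the right one.

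The direction $(1)\Rightarrow(2)$ contains a genuine gap, and it is compounded by a misdiagnosis of where the difficulty sits. The actual content of Roos's theorem --- recorded as \cref{09243093} in the paper --- is that for \emph{any} Gabriel--Popescu embedding $(\Mod\Lambda,Q,S)$ with $\Lambda=\End_{\cC}(U)$ for a single generator $U$, the hypotheses Ab6 and Ab4* already force $\Ker Q$ to be bilocalizing; combining this with \cref{42902344} (the bijection between idempotent ideals and bilocalizing subcategories, i.e.\ the Jans/TTF correspondence you correctly invoke at the end) immediately yields $(2)$. Your claim that ``no single off-the-shelf presentation will do'' is therefore unfounded for the Gabriel--Popescu presentation, and the detour through the unitalization of the non-unital ring $\bigoplus_{j,j'}\Hom_{\cC}(U_j,U_{j'})$ is neither necessary nor actually executed: you do not show that the category of firm modules over that ring is $\cC$, nor how the enlarged ring makes ``every object of the kernel arise as a section-cokernel.'' More importantly, the one step that carries the entire weight of the theorem --- proving that Ab6 and Ab4* in $\cC$ force $\Ker Q$ to be closed under direct products (equivalently, that $Q$ preserves products, so that $Q$ acquires a left adjoint) --- is explicitly deferred as ``the main obstacle.'' That step is where one must express $Q$ applied to a product of kernel objects in terms of filtered unions of images of maps from finitely generated free modules over the generator and use Ab6 to interchange the product with those filtered unions; without it, the proposal identifies the correct target but does not prove the implication that constitutes the theorem.
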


In this paper, we refer to a Grothendieck category that satisfies Ab6 and Ab4* as a \emph{Roos category}.

Grothendieck's condition Ab6 asserts the distributive property for intersections of filtered unions (see \cref{09823809}), and Ab4* asserts that direct products are exact. The category of modules over an arbitrary ring satisfies Ab6 and Ab4*. Every locally noetherian Grothendieck category satisfies Ab6 (\cite{MR0217145}; see also \cite[Example~2.27]{MR4033823}) but does not necessarily satisfy Ab4*. So \cref{62264625} can be regarded as a characterization of Grothendieck categories that have exact direct products, under the mild assumption Ab6. \cref{62264625} has been used to investigate the (non-)exactness of direct products for various classes of Grothendieck categories, including the author's result that asserts that the category of quasi-coherent sheaves over a divisorial noetherian scheme has exact direct products if and only if $X$ is an affine scheme (\cite[Theorem~1.1]{MR4033823}).

On the other hand, the class of Grothendieck categories satisfying the condition in \cref{62264625} \cref{12719231} is fundamental in the study of (non-unital) \emph{idempotent rings}, that is, rings $A$ that do not necessarily have identities but satisfy $A^{2}=A$. For an idempotent ring $A$, the category of all $A$-modules is not really the category of interest because it is equivalent to the category of modules over the Dorroh overring of $A$, which is a unital ring (see \cite[sections 1.5 and 6.3]{MR1144522}). Instead, one of the categories of interest is the category of \emph{firm $A$-modules}, which are right $A$-modules $M$ such that the canonical homomorphism $M\otimes_{A}A\to M$ is an isomorphism. It is known that the categories arising as the category of firm modules over an idempotent ring are precisely the Grothendieck categories satisfying the condition in \cref{62264625} \cref{12719231} (see \cite[section~7]{QuillenPreprint} or \cite{MarinThesis}). Thus, \cref{62264625} can also be regarded as a characterization of the category of firm modules over an idempotent ring by a purely categorical property.

In \cite{MR3361309}, Brandenburg, Chirvasitu, and Johnson-Freyd studied the reflexivity and dualizability of locally presentable linear categories (over a field). It is known that every Grothendieck category is locally presentable. They provide several examples and non-examples of reflexive categories and dualizable categories, and in \cite[Remark~3.6]{MR3361309}, they conjectured that every dualizable locally presentable linear category is strongly generated by compact projective objects. Although there has been a result that partially confirms the conjecture (\cite{MR4366933}), there is a counterexample (\cite[Example~3.1.24]{arXiv:2307.16337}). Indeed, as a consequence of their own result in the same paper (\cite[Corollary~3.4]{MR3361309}), it follows that every Roos category is dualizable (see \cref{68013908}), and there is a nonzero Roos category that has no nonzero projective objects, constructed by Roos (\cite[Example~3]{MR0215895}). Therefore, the conjecture should be modified as follows:

\begin{conjecture}\label{20938093}
	Every dualizable locally presentable linear category is a Roos category.
\end{conjecture}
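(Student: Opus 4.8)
The plan is to deduce \cref{20938093} from a sharper statement about Grothendieck categories, namely that every dualizable Grothendieck category $\cC$ whose dual $\cC^{\vee}$ is again a Grothendieck category is a Roos category; the passage from this statement to the conjecture is supplied by Stefanich's theorem, which guarantees that a dualizable locally presentable linear category is abelian and that both it and its dual are Grothendieck categories. Thus the real work is to show: if $\cC$ is Grothendieck, dualizable, and $\cC^{\vee}$ is Grothendieck, then $\cC$ satisfies Ab6 and Ab4*. Throughout I would keep \cref{62264625} as the target, aiming to realize $\cC$ in the form of condition \cref{12719231}, which by Roos's theorem is equivalent to being a Roos category.

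The first step is to make the situation module-theoretic via the Gabriel--Popescu embedding. Choosing a generator $U$ of $\cC$ and setting $R=\End_{\cC}(U)$, I obtain an exact localization $q\colon\Mod R\to\cC$ with fully faithful right adjoint, so that $\cC\simeq(\Mod R)/\cS$ for a localizing (hereditary torsion) subcategory $\cS=\Ker q$. The goal becomes to show that $\cS$ is in fact \emph{bilocalizing}, that is, the torsion-torsionfree middle class of a TTF triple; by the classical correspondence of Jans, such subcategories of $\Mod R$ are exactly the subcategories $\Mod(R/I)$ attached to two-sided \emph{idempotent} ideals $I$, the idempotency $I^{2}=I$ being forced by closure under extensions. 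Once $\cS=\Mod(R/I)$ with $I$ idempotent, condition \cref{12719231} holds and \cref{62264625} yields Ab6 and Ab4* at once.

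To produce the missing colocalization I would pass to the dual. Since $(\Mod R)^{\vee}\simeq\Mod(R^{\op})$ and the duality $(-)^{\vee}$ is a contravariant involution on dualizable objects, the localization $q$ dualizes to a cocontinuous functor $q^{\vee}\colon\cC^{\vee}\to\Mod(R^{\op})$. The retract identity (composite of $q$ with its right adjoint is the identity on $\cC$) dualizes to exhibit $\cC^{\vee}$ as a retract of $\Mod(R^{\op})$, and idempotent-completeness of the ambient $2$-category lets me take $q^{\vee}$ to be fully faithful; being cocontinuous it then admits a right adjoint, so $\cC^{\vee}$ is realized as a \emph{coreflective} abelian subcategory of $\Mod(R^{\op})$. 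This coreflective structure on the dual side is precisely dual to giving $q$ a left adjoint, i.e.\ to $\cS$ being closed under products and hence bilocalizing. The hypothesis that $\cC^{\vee}$ is Grothendieck is what makes this subcategory abelian and well-behaved enough for the identification to go through.

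The main obstacle, I expect, lies in this dualization step: reconciling the abstract duality of presentable linear categories with the concrete module-theoretic dual $\Mod(R^{\op})$, and verifying that the coreflective embedding of $\cC^{\vee}$ translates faithfully into closure of $\cS$ under products. A subtle point is that the Gabriel--Popescu right adjoint $\Hom_{\cC}(U,-)$ need not be exact, since Roos categories may lack projective generators, so one cannot transport short exact sequences naively; the exactness of products in $\cC$ (Ab4*) must instead be extracted from the Ab5 property of $\cC^{\vee}$ through the limit--colimit interchange of the duality. A secondary check is that the bilocalizing property holds for the specific ring $R$ arising from the chosen generator, rather than only after changing generators. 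With these points settled, the chain ``dualizable locally presentable $\Rightarrow$ $\cC,\cC^{\vee}$ Grothendieck $\Rightarrow$ $\cS=\Mod(R/I)$ with $I$ idempotent $\Rightarrow$ Roos'' completes the proof of \cref{20938093}; the reverse implication, that every Roos category is dualizable, is already recorded in \cref{68013908}.
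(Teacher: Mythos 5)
Your global reduction is the same as the paper's: use Stefanich to get that $\cC$ and $\cC^{*}$ are Grothendieck, pass to a Gabriel--Popescu presentation $\cC\simeq(\Mod\Lambda)/\cX$, and aim to show $\cX$ is bilocalizing, hence of the form $\Mod(\Lambda/I)$ with $I$ idempotent. The gap is in the step you yourself flag as the main obstacle, and it is fatal as stated. You propose to ``dualize the retract identity'' $Q\circ S\cong\id_{\cC}$ to exhibit $\cC^{*}$ as a cocontinuous retract of $\Mod(\Lambda^{\op})$. But $S=\Hom_{\cC}(U,-)$ preserves limits, not colimits; it is not a morphism in the $2$-category of locally presentable categories and cocontinuous functors on which the duality $(-)^{*}=\Cocont_{R}(-,\Mod R)$ is functorial (by precomposition). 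So the identity $Q\circ S\cong\id_{\cC}$ simply does not dualize, and no retraction of $\cC^{*}$ onto $\Mod(\Lambda^{\op})$ is produced. Producing a \emph{cocontinuous} section of $Q$ is precisely the nontrivial content of the theorem, and the paper obtains it (\cref{85148201}) by a different mechanism: dualizability identifies $\Cocont_{R}(\cC,\Mod\Lambda)$ and $\Cocont_{R}(\cC,\cC)$ with $(\Mod\Lambda)\boxtimes_{R}\cC^{*}$ and $\cC\boxtimes_{R}\cC^{*}$ respectively, and the density of $Q\boxtimes\id_{\cC^{*}}$ is checked concretely by writing both tensor products as quotients of $\Mod(\Lambda\otimes_{R}\Gamma)$ via \cref{34641739}. (Incidentally, the section of a retraction need not be fully faithful, so the appeal to idempotent-completeness to make $q^{\vee}$ fully faithful is also unjustified.)

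There is a second, independent gap: even once a cocontinuous $H$ with $Q\circ H\cong\id_{\cC}$ is in hand, it is not formal that $Q$ acquires a left adjoint, nor that $\Ker Q$ is closed under products --- \cref{79237878} gives an explicit cocontinuous retraction $Q\circ H\cong\id$ where $Q$ has no left adjoint. Your assertion that the coreflective structure on the dual side ``is precisely dual to giving $q$ a left adjoint'' is exactly the kind of formal dualization that fails here. The paper instead closes this step module-theoretically (\cref{72974304}): Eilenberg--Watts writes $H\circ Q\cong-\otimes_{\Lambda}B$ for a bimodule $B$, a bimodule map $\phi\colon B\to\Lambda$ is constructed from the unit of $(Q,S)$, and one checks directly that $\Ker Q=\Mod(\Lambda/\Im\phi)$, which is closed under products. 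You would need an argument of this kind (or some substitute) to convert your coreflective embedding into closure of $\cX$ under products; as written, that implication is asserted rather than proved.
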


The aim of this paper is to prove the following result by taking a module-theoretic approach based on the Gabriel-Popescu embedding:

\begin{theorem}[\cref{77807299}]\label{08406530}
	Let $R$ be a commutative ring, and let $\cC$ be an $R$-linear Grothendieck category. If $\cC$ is dualizable and its dual is again a Grothendieck category, then $\cC$ is a Roos category.
\end{theorem}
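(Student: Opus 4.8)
The plan is to combine the Gabriel--Popescu embedding with Roos's characterization \cref{62264625}, thereby reducing the entire statement to a single product-exactness property of one localizing subcategory. First I would fix a generator $U$ of $\cC$ and set $\Lambda=\End_{\cC}(U)$. Gabriel--Popescu then realizes $\cC$ as the Giraud localization $\Mod\Lambda/\cT$, where $\cT$ is a localizing subcategory, the localization $a\colon\Mod\Lambda\to\cC$ is exact, and its section is fully faithful. By \cref{62264625} it is enough to show that $\cT$ is bilocalizing and of the form $\Mod(\Lambda/I)$ for an idempotent two-sided ideal $I$. Since $\cT$ is already a hereditary torsion class, the torsion-theoretic theory of TTF triples (Jans) shows that this is equivalent to the single condition that $\cT$ be closed under direct products, the idempotent ideal $I$ being recovered as the value at $\Lambda$ of the corresponding torsion radical. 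Thus the whole theorem reduces to proving that $\cC$ has exact products, i.e.\ Ab4*, after which Ab6 comes for free from \cref{62264625}.

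Second I would bring in the two hypotheses to establish this product-exactness, since a general Giraud localization fails Ab4* precisely when $\cT$ is not product-closed. Here I would pass to the dual description: dualizability makes $\cC$ reflexive, so $\cC\simeq\cC^{\vee\vee}\simeq\Func^{\mathrm{L}}_{R}(\cC^{\vee},\Mod R)$, the category of cocontinuous $R$-linear functors out of the dual. The hypothesis that $\cC^{\vee}$ is again Grothendieck is exactly what makes this tractable: applying Gabriel--Popescu a second time presents $\cC^{\vee}$ as $\Mod\Gamma/\cS$ for a ring $\Gamma$ and a localizing subcategory $\cS$. Via Eilenberg--Watts, cocontinuous functors $\Mod\Gamma\to\Mod R$ are precisely the functors $-\otimes_{\Gamma}N$ attached to left $\Gamma$-modules $N$, and a cocontinuous functor factors through the quotient $\Mod\Gamma/\cS$ iff it annihilates $\cS$. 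Hence $\cC$ is identified with the full subcategory of $\Mod\Gamma^{\op}$ consisting of those $N$ with $S\otimes_{\Gamma}N=0$ for all $S\in\cS$.

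Third, it then remains to show that this subcategory of $\Mod\Gamma^{\op}$ is closed under products, for products in $\Mod\Gamma^{\op}$ are exact and transporting them along the equivalence $\cC\simeq\Func^{\mathrm{L}}_{R}(\cC^{\vee},\Mod R)$ would realize products in $\cC$ inside a module category, giving Ab4*. Feeding this back into the first paragraph makes $\cT\subseteq\Mod\Lambda$ product-closed, hence bilocalizing, hence $\Mod(\Lambda/I)$ with $I$ idempotent, and \cref{62264625} concludes that $\cC$ is a Roos category. The delicate point is that the defining condition $S\otimes_{\Gamma}N=0$ is \emph{not} visibly preserved under products, because tensor does not commute with products; so product-closure here cannot be checked by a naive pointwise argument and must instead be forced by the full duality data.

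The main obstacle is therefore exactly this product-exactness, together with the faithful translation of an abstract property of the Lurie tensor product through two separate Gabriel--Popescu presentations and the Eilenberg--Watts equivalence. I expect that the evaluation and coevaluation functors, subject to the triangle identities, constrain $\cS$ to a special product-compatible form---morally, generated by objects on which tensoring commutes with products---so that the cut-out subcategory becomes coreflective in $\Mod\Gamma^{\op}$ and hence closed under products. Making this precise, and in particular matching the localizing subcategory $\cS$ on the dual side with the idempotent ideal $I$ on the original side and verifying idempotency, is where the real work lies; it is also the precise point at which \emph{full} dualizability, rather than merely the Grothendieck-ness of the dual, is indispensable.
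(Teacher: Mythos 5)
Your opening reduction is sound and coincides with the paper's actual target: present $\cC$ as $(\Mod\Lambda)/\cT$ via Gabriel--Popescu and prove that the localizing subcategory $\cT$ is closed under direct products, hence bilocalizing and of the form $\Mod(\Lambda/I)$ for an idempotent ideal $I$. Two corrections to how you frame it, though. By \cref{09243093}, product-closure of $\cT$ is equivalent to the \emph{conjunction} of Ab6 and Ab4*, not to Ab4* alone, so ``prove Ab4* and Ab6 comes for free'' is not what \cref{62264625} gives you; what must be verified is product-closure of $\cT$ itself (equivalently, that $Q$ preserves products, or has a left adjoint). And in your third paragraph, coreflectivity gives closure under \emph{colimits}; it is reflectivity that would give closure under products.

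The genuine gap is that the one step where dualizability must enter --- forcing this product-compatibility --- is never executed; you explicitly defer it as ``where the real work lies,'' but it is the entire content of the theorem. The route you sketch toward it also has a flaw at the level of the setup: a cocontinuous functor $-\otimes_{\Gamma}N$ does \emph{not} factor through $(\Mod\Gamma)/\cS$ merely because $S\otimes_{\Gamma}N=0$ for all $S\in\cS$; it must invert all $\cS$-local isomorphisms, which brings in a $\Tor_{1}$ obstruction (for $\Gamma=\bbZ$ and $\cS$ the torsion groups, $N=\bbQ/\bbZ$ annihilates every object of $\cS$, yet $-\otimes_{\bbZ}\bbQ/\bbZ$ does not invert $\bbZ\into\bbQ$). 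So the subcategory of $\Mod\Gamma^{\op}$ whose product-closure you propose to prove is not even the correct model of $\cC$. For comparison, the paper closes the gap as follows: dualizability of $\cC$ and of $\Mod\Lambda$ identifies $\Cocont_{R}(\cC,\cC)$ and $\Cocont_{R}(\cC,\Mod\Lambda)$ with $\cC\boxtimes_{R}\cC^{*}$ and $(\Mod\Lambda)\boxtimes_{R}\cC^{*}$, and the hypothesis that $\cC^{*}$ is Grothendieck lets one compute both tensor products as quotients of $\Mod(\Lambda\otimes_{R}\Gamma)$ by nested localizing subcategories, whence $Q_{*}$ is dense and $\id_{\cC}$ lifts to a cocontinuous $H$ with $Q\circ H\cong\id_{\cC}$ (\cref{85148201}); then Eilenberg--Watts writes $H\circ Q\cong-\otimes_{\Lambda}B$ for a bimodule $B$, a canonical bimodule map $\phi\colon B\to\Lambda$ yields $I=\Im\phi$, and a direct check gives $\Ker Q=\Mod(\Lambda/I)$ (\cref{72974304}). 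Some concrete mechanism of this kind --- producing the ideal $I$ from the duality data rather than hoping the constraint appears --- is what your outline still needs.
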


Our theorem only deals with Grothendieck categories whose duals are Grothendieck categories, but in fact, Stefanich \cite[Corollary~3.1.19]{arXiv:2307.16337} has already proved that every dualizable linear \emph{cocomplete} category is a Grothendieck category satisfying Ab4*, and thus its dual is again a Grothendieck category. So, combined with our result, we conclude the following, confirming \cref{20938093}:

\begin{corollary}[\cref{04730941}]\label{93948473}
	Let $R$ be a commutative ring, and let $\cC$ be an $R$-linear cocomplete category. Then the following are equivalent:
	\begin{enumerate}
		\item $\cC$ is dualizable.
		\item $\cC$ is a Roos category.
	\end{enumerate}
\end{corollary}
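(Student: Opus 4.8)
The plan is to prove the two implications separately and to obtain the equivalence by combining Stefanich's structural theorem with the main theorem of this paper, \cref{08406530}.

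I would dispose first of the implication from being a Roos category to dualizability, since this is the direction already available in the literature. By definition a Roos category is an $R$-linear Grothendieck category satisfying Ab6 and Ab4*; in particular it is $R$-linear and cocomplete. By \cref{68013908}, extracted from \cite[Corollary~3.4]{MR3361309}, every Roos category is dualizable. This settles the direction (2)$\Rightarrow$(1) with no further argument.

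For the converse (1)$\Rightarrow$(2), suppose $\cC$ is a dualizable $R$-linear cocomplete category. The decisive input is Stefanich's result \cite[Corollary~3.1.19]{arXiv:2307.16337}, which asserts that any dualizable linear cocomplete category is automatically a Grothendieck category satisfying Ab4*, and hence that its dual is again a Grothendieck category. With this in hand, $\cC$ meets both hypotheses of \cref{08406530}, namely that it is dualizable and that its dual is a Grothendieck category. Applying \cref{08406530} then yields that $\cC$ is a Roos category, completing the equivalence.

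I expect no genuine obstacle to remain at the level of the corollary, as all of the substantive content has been localized upstream. The only point demanding care is that a bare cocomplete dualizable category need not a priori be Grothendieck at all, let alone possess a Grothendieck dual; this is precisely the gap that Stefanich's theorem closes, after which the hard step of the whole circle of ideas, the verification of Ab6, is supplied by \cref{08406530}. The corollary is therefore best read as the confluence of these two results, and its proof reduces to checking that their hypotheses chain together in the manner above.
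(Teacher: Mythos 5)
Your proposal is correct and follows essentially the same route as the paper: the direction from Roos category to dualizable is \cref{68013908}, and the converse combines Stefanich's result (dualizable linear cocomplete implies Grothendieck with Ab4*, applied also to the dual, which is itself dualizable) with the main theorem \cref{77807299}. No substantive difference from the paper's argument.
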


\subsection*{Acknowledgments}
\label{19101576}

The author was supported by JSPS KAKENHI Grant Numbers JP21H04994 and JP24K06693, and Osaka Central Advanced Mathematical Institute: MEXT Joint Usage/Research Center on Mathematics and Theoretical Physics JPMXP0723833165.

\section{Preliminaries}
\label{09287520}

\begin{convention}\label{02947209}\leavevmode
	We use the following convention in this paper: In an abelian category, \emph{direct sums} mean coproducts, \emph{direct products} mean products. All colimits and limits are assumed to be small.
	
	A \emph{ring} means an associative ring with identity, unless otherwise specified. An associative ring that may not have identity will be referred to as a \emph{non-unital ring}. An \emph{idempotent ring} is a non-unital ring $A$ such that $A^{2}=A$.
\end{convention}

Throughout the paper, we fix a commutative ring $R$, which will be used as a base ring. An \emph{$R$-algebra} $\Lambda$ means a ring $\Lambda$ equipped with a ring homomorphism from $R$ to the center of $\Lambda$, which is equivalent to an $R$-linear category with a single object. $\Mod\Lambda$ denotes the category of right $\Lambda$-modules.

\subsection{Dualizability of locally presentable categories}
\label{63055485}

We recall the notion of dualizability and some related results; see \cite{MR3361309} for more details. \cite{MR3361309} only dealt with the case where $R$ is a field, but the results used in this paper are also valid over an arbitrary commutative ring $R$.

For locally presentable $R$-linear categories $\cC$ and $\cD$, denote by $\Cocont_{R}(\cC,\cD)$ the category of $R$-linear cocontinuous functors from $\cC$ to $\cD$, with morphisms being the natural transformations. Recall that a functor is \emph{cocontinuous} if it preserves all (small) colimits. It is known that $\Cocont_{R}(\cC,\cD)$ is again a locally presentable $R$-linear categories. If $R=\bbZ$, then $\Cocont_{\bbZ}(\cC,\cD)$ consists of all cocontinuous functors (which are necessarily additive) functors.

There is a locally presentable $R$-linear category $\cC\boxtimes_{R}\cD$ and a bifunctor $\otimes_{R}\colon\cC\times\cD\to\cC\boxtimes_{R}\cD$ that is cocontinuous and $R$-linear in each variable satisfying the following universal property: For an $R$-linear cocomplete category $\cE$, the $R$-linear functor
\begin{equation*}
	\begin{matrix}
		\Cocont_{R}(\cC\boxtimes_{R}\cD,\cE) & \to & \Cocont_{R}(\cC,\cD;\cE) \\
		\vin & & \vin\\
		F & \mapsto & F\circ\otimes_{R}
	\end{matrix}
\end{equation*}
is an equivalence, where $\Cocont_{R}(\cC,\cD;\cE)$ denotes the category of bifunctors $\cC\times\cD\to\cE$ that are cocontinuous and $R$-linear in each variable.

Define the \emph{dual} of $\cC$ to be $\cC^{*}:=\Cocont_{R}(\cC,\Mod R)$. Since $\Cocont_{R}(\cC,\Mod R)=\Cocont_{\bbZ}(\cC,\Mod\bbZ)$, the definition of $\cC^{*}$ does not depend on the choice of the base ring $R$. There is a canonical functor
\begin{equation*}
	\begin{matrix}
		\cC^{*}\boxtimes_{R}\cD & \to & \Cocont_{R}(\cC,\cD) \\
		\vin & & \vin\\
		F\otimes_{R} D & \mapsto & (C\mapsto F(C)\otimes_{R}D)
	\end{matrix}
\end{equation*}
where the functor $-\otimes_{R}D\colon\Mod R\to\cD$ is defined to be the left adjoint of $\Hom_{\cD}(D,-)\colon\cD\to\Mod R$ (the left adjoint exists because the functor preserves all limits).

\begin{definition}\label{}
	A locally presentable $R$-linear category $\cC$ is called \emph{dualizable} if the canonical functor $\cC^{*}\boxtimes_{R}\cC\to\Cocont_{R}(\cC,\cC)$ is an equivalence.
\end{definition}

Every Grothendieck category is locally presentable. For an $R$-algebra $\Lambda$, the category $\Mod\Lambda$ of right $\Lambda$-modules is an $R$-linear Grothendieck category, hence locally presentable. For $R$-algebras $\Lambda$ and $\Gamma$, the Eilenberg-Watts theorem asserts that there is an $R$-linear equivalence
\begin{equation*}
	\begin{matrix}
		\Mod(\Lambda^{\op}\otimes_{R}\Gamma) & \isoto & \Cocont_{R}(\Mod\Lambda,\Mod\Gamma) \\
		\vin & & \vin\\
		M & \mapsto & -\otimes_{\Lambda}M\rlap{.}
	\end{matrix}
\end{equation*}
Note that $\Mod(\Lambda^{\op}\otimes_{R}\Gamma)$ is canonically equivalent to the category of $\Lambda$-$\Gamma$-bimodules over $R$. In particular, $(\Mod\Lambda)^{*}=\Cocont_{R}(\Mod\Lambda,\Mod R)\cong\Mod(\Lambda^{\op})$.

It is shown in \cite[Lemma~3.5]{MR3361309} that every locally presentable $R$-linear category that admits a strongly generating set consisting of compact projective objects is dualizable. In particular, $\Mod\Lambda$ is dualizable since $\Lambda\in\Mod\Lambda$ is a compact projective object and the singleton $\set{\Lambda}$ is a strongly generating set.

\subsection{Gabriel-Popescu embedding and tensor products}
\label{14007805}

\begin{definition}\label{09238459}
	Let $\cC$ be a Grothendieck category.
	\begin{enumerate}
		\item\label{04092349} A \emph{localizing subcategory} of $\cC$ is a full subcategory that is closed under subobjects, quotient objects, extensions, and direct sums.
		\item\label{42305982} A \emph{closed subcategory} of $\cC$ is a full subcategory that is closed under subobjects, quotient objects, direct sums, and direct products.
		\item\label{02398452} A \emph{bilocalizing subcategory} of $\cC$ is a full subcategory that is both localizing and closed.
	\end{enumerate}
	\begin{table}[H]
		\begin{tabular}{cccccc}
			\toprule
			& sub & quotient & extension & direct sum & direct product \\
			\midrule
			localizing		& \checkmark & \checkmark & \checkmark & \checkmark & \\
			closed			& \checkmark & \checkmark & & \checkmark & \checkmark \\
			bilocalizing	& \checkmark & \checkmark & \checkmark & \checkmark & \checkmark \\
			\bottomrule
		\end{tabular}
	\end{table}
\end{definition}

The following result is well-known:

\begin{theorem}[{\cite[Lemma~V.2.1]{MR0232821} and \cite[Proposition~III.6.4.1]{MR1347919}}]\label{42902344}
	Let $\Lambda$ be a ring. Then there is a bijection
	\begin{equation}\label{02589345}
		\begin{matrix}
			\setwithtext{ideals of $\Lambda$} & \isoto & \setwithtext{closed subcategories of $\Mod\Lambda$} \\
			\vin & & \vin\\
			I & \mapsto & \Mod(\Lambda/I)\rlap{,}
		\end{matrix}
	\end{equation}
	where $\Mod(\Lambda/I)$ is identified with the closed subcategory consisting of all right $\Lambda$-modules $M$ with $MI=0$. For ideals $I$ and $J$ of $\Lambda$, the product $IJ$ is sent to $\Mod(\Lambda/J)*\Mod(\Lambda/I)$, which is, by definition, the full subcategory consisting of all $M\in\Mod\Lambda$ that admit a short exact sequence
	\begin{equation*}
		0\to L\to M\to N\to 0
	\end{equation*}
	with $L\in\Mod(\Lambda/J)$ and $N\in\Mod(\Lambda/I)$.
	
	As a consequence, the bijection \cref{02589345} induces a bijection
	\begin{equation*}
		\setwithtext{idempotent ideals of $\Lambda$}\isoto\setwithtext{bilocalizing subcategories of $\Mod\Lambda$}.
	\end{equation*}
	Here an ideal $I$ is called \emph{idempotent} if $I^{2}=I$.
\end{theorem}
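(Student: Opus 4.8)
The plan is to establish the stated bijection first, then deduce the product formula and the idempotent statement as formal consequences. Throughout, the inverse map will be $\cX\mapsto\bigcap_{M\in\cX}\Ann(M)$, and the one nontrivial point is surjectivity, which is exactly where the product axiom of closed subcategories does the work.

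First I would check that $I\mapsto\Mod(\Lambda/I)$ is well defined and injective. That the full subcategory $\Mod(\Lambda/I)=\setwithcondition{M}{MI=0}$ is closed under subobjects, quotients, direct sums, and direct products is a direct verification; for products one uses that $(m_{\alpha})_{\alpha}\cdot i=(m_{\alpha}i)_{\alpha}=0$ whenever each $M_{\alpha}I=0$. Injectivity follows by evaluating on the module $\Lambda/I$: if $\Mod(\Lambda/I)=\Mod(\Lambda/J)$, then $\Lambda/I\in\Mod(\Lambda/J)$ forces $J\subseteq I$, and symmetrically $I\subseteq J$. Note also that the correspondence is inclusion-reversing, which will be used at the end.

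The substantive step is surjectivity. Given a closed subcategory $\cX$, I would set $I:=\bigcap_{M\in\cX}\Ann(M)$, which is a two-sided ideal since the annihilators form a set of two-sided ideals of $\Lambda$. The inclusion $\cX\subseteq\Mod(\Lambda/I)$ is immediate, and for the reverse inclusion it suffices to prove $\Lambda/I\in\cX$, because every $M$ with $MI=0$ is a quotient of a direct sum of copies of $\Lambda/I$ and $\cX$ is closed under sums and quotients. To obtain $\Lambda/I\in\cX$ I would exploit closure under products, the one axiom not shared by localizing subcategories: letting $K$ range over the (set-indexed) family of right ideals with $\Lambda/K\in\cX$, closure under subobjects shows that this family contains $\Ann(m)$ for every element $m$ of every object of $\cX$, whence $\bigcap_{K}K=I$; the canonical map $\Lambda\to\prod_{K}\Lambda/K$ then has kernel $I$ and realizes $\Lambda/I$ as a subobject of a product of objects of $\cX$, so $\Lambda/I\in\cX$. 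This is the main obstacle, and the product axiom is precisely what makes it succeed.

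For the product formula I would argue directly. If $M$ satisfies $M(IJ)=0$, the short exact sequence $0\to MI\to M\to M/MI\to 0$ exhibits $M\in\Mod(\Lambda/J)*\Mod(\Lambda/I)$, since $(MI)J=M(IJ)=0$ and $(M/MI)I=0$. Conversely, given $0\to L\to M\to N\to 0$ with $LJ=0$ and $NI=0$, the image of $mi$ in $N$ vanishes for $i\in I$, so $mi\in L$ and hence $(mi)j=0$ for $j\in J$; thus $M(IJ)=0$. This yields $\Mod(\Lambda/J)*\Mod(\Lambda/I)=\Mod(\Lambda/IJ)$. Finally, the idempotent statement is a formal consequence: a closed subcategory is bilocalizing precisely when it is closed under extensions, i.e. when $\cX*\cX\subseteq\cX$. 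With $\cX=\Mod(\Lambda/I)$, the product formula (taking $J=I$) gives $\cX*\cX=\Mod(\Lambda/I^{2})$, and since the bijection is inclusion-reversing, $\Mod(\Lambda/I^{2})\subseteq\Mod(\Lambda/I)$ holds iff $I\subseteq I^{2}$; combined with the trivial $I^{2}\subseteq I$ this is equivalent to $I=I^{2}$. Hence bilocalizing subcategories correspond exactly to idempotent ideals.
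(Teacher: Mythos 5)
Your proof is correct. The paper itself gives no proof of this theorem --- it is quoted from Gabriel \cite{MR0232821} and Rosenberg \cite{MR1347919} --- and your argument is essentially the standard one found there: the key point, that a closed subcategory $\cX$ contains $\Lambda/I$ for $I=\bigcap_{M\in\cX}\Ann(M)$ by embedding it into a set-indexed product $\prod_{K}\Lambda/K$ of cyclic objects of $\cX$, is exactly where closure under direct products is used, and the product formula and the reduction of the idempotent statement to the inclusion-reversing property are handled correctly.
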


\begin{theorem}[{Gabriel-Popescu \cite{MR0166241}; see also \cite[Theorem~4.14.2]{MR0340375}}]\label{08230321}
	Let $\cC$ be a Grothendieck category. Let $U$ be a generator in $\cC$ and set $\Lambda:=\End_{\cC}(
U)$. Then the functor
	\begin{equation*}
		S:=\Hom_{\cC}(U,-)\colon\cC\to\Mod\Lambda
	\end{equation*}
	has a left adjoint $Q\colon\Mod\Lambda\to\cC$. Let
	\begin{equation*}
		\cX:=\Ker Q=\setwithcondition{M\in\Mod\Lambda}{Q(M)=0}.
	\end{equation*}
	Then $Q$ induces an equivalence
	\begin{equation*}
		\frac{\Mod\Lambda}{\cX}\isoto\cC.
	\end{equation*}
	
	Moreover, the following hold:
	\begin{enumerate}
		\item\label{98234023} The functor $Q$ is exact and dense. $Q(\Lambda)=U$.
		\item\label{09715317} The functor $S=\Hom_{\cC}(U,-)$ is left exact and fully faithful. $S(U)=\Hom_{\cC}(U,U)=\Lambda$.
		\item\label{98727843} The essential image of $S$ consists of all $M\in\Mod\Lambda$ satisfying
		\begin{equation*}
			\Hom_{\Lambda}(X,M)=0\quad\text{and}\quad\Ext_{\Lambda}^{1}(X,M)=0\quad\text{for all $X\in\cX$}.
		\end{equation*}
		\item\label{07215821} The counit morphism $Q\circ S\to\id_{\cC}$ is an isomorphism.
	\end{enumerate}
\end{theorem}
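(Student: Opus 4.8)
The statement is the Gabriel–Popescu theorem, and the plan is to construct the adjoint pair $Q\dashv S$ by hand, prove that the counit is an isomorphism, and then read off the remaining assertions from general localization theory. First I would construct $Q$ explicitly: since $\cC$ is cocomplete and $\Lambda$ is a free generator of $\Mod\Lambda$, set $Q(\Lambda^{(I)}):=U^{(I)}$ on free modules and, for an arbitrary $M$ with presentation $\Lambda^{(J)}\to\Lambda^{(I)}\to M\to 0$, define $Q(M)$ as the cokernel of the induced morphism $U^{(J)}\to U^{(I)}$. This produces a cocontinuous, hence right exact, functor with $Q(\Lambda)=U$. The adjunction isomorphism $\Hom_{\cC}(Q(M),C)\cong\Hom_{\Lambda}(M,S(C))$ is verified first on free modules, where both sides compute $\prod_{I}\Hom_{\cC}(U,C)$, and then extended to all $M$ by right exactness. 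On the other side, $S=\Hom_{\cC}(U,-)$ is left exact because every hom functor is, and $S(U)=\End_{\cC}(U)=\Lambda$; moreover $U$ being a generator makes $S$ faithful. These observations already give the elementary parts of \cref{98234023} and \cref{09715317}.

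The heart of the proof is to show that the counit $\epsilon_{C}\colon QS(C)\to C$ is an isomorphism. Epimorphy is immediate, since $U$ generates: the evaluation morphism $U^{(\Omega)}\onto C$, with $\Omega:=\Hom_{\cC}(U,C)$, is an epimorphism and factors through $\epsilon_{C}$. The difficulty is monomorphy. Writing $K:=\Ker\epsilon_{C}$, faithfulness of $S$ reduces the claim to $\Hom_{\cC}(U,K)=0$, because $K=0$ if and only if $S(K)=\Hom_{\cC}(U,K)=0$. Given a morphism $h\colon U\to K\subseteq QS(C)$, I would pull back the canonical epimorphism $U^{(\Omega)}\onto QS(C)$ (whose composite with $\epsilon_{C}$ is the evaluation map) along $h$, and use that $U^{(\Omega)}$ is the filtered union of its finite partial sums $U^{(F)}$. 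The crucial input is Grothendieck's condition Ab5, the exactness of filtered colimits, which holds in every Grothendieck category: it lets me commute this pullback with the filtered colimit and conclude that $h$ already factors through some finite stage $U^{(F)}$, where the relation $\epsilon_{C}h=0$ forces $h=0$. This is the one place where the Ab5 axiom is used, and it is where essentially all the difficulty of the theorem is concentrated.

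Once $\epsilon$ is an isomorphism, the rest is formal. Density of $Q$ follows from $C\cong QS(C)$, completing \cref{98234023}, and the triangle identities yield both that $S$ is fully faithful, finishing \cref{09715317}, and that $Q\circ S\cong\id_{\cC}$, which is \cref{07215821}. Exactness of $Q$ then follows: it is right exact as a cocontinuous functor, and a diagram chase using that its right adjoint $S$ is fully faithful with invertible counit shows that $Q$ preserves monomorphisms. Consequently $\cX=\Ker Q$ is closed under subobjects, quotients, extensions, and direct sums, so it is a localizing subcategory, and the equivalence $\Mod\Lambda/\cX\isoto\cC$ is exactly Gabriel's description of an exact localization: a cocontinuous exact functor with fully faithful right adjoint exhibits its target as the quotient by the kernel localizing subcategory. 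Finally, the essential image of $S$ is the class of $\cX$-closed objects, and the standard torsion-theoretic characterization identifies these as the modules $M$ with $\Hom_{\Lambda}(X,M)=0$ and $\Ext^{1}_{\Lambda}(X,M)=0$ for all $X\in\cX$, giving \cref{98727843}.
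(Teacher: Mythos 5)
The paper itself gives no proof of this theorem; it is quoted from Gabriel--Popescu and Popescu's book, so there is nothing in-paper to compare against. Judged on its own terms, your outline has the right skeleton (construct $Q$ on free presentations, get the counit epimorphic from the generator property, reduce everything to monicity of the counit plus exactness of $Q$), but the two steps that carry all the weight of the theorem are wrong as stated. For monicity: after pulling back $U^{(\Omega)}\onto QS(C)$ along $h\colon U\to QS(C)$ and writing $U^{(\Omega)}=\bigcup_{F}U^{(F)}$ over finite $F$, Ab5 gives you that the pullback is the filtered union of the partial pullbacks, and hence that $U$ is the filtered union of the images of these partial pullbacks in $U$; it does \emph{not} give you that one of these images is all of $U$, nor that $h$ factors through a finite stage $U^{(F)}$. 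That conclusion would require $U$ to be a compact (finitely generated) object, which a generator of a Grothendieck category need not be. Avoiding exactly this compactness assumption is the whole point of the Gabriel--Popescu key lemma, whose standard proof shows instead that for an $R$-submodule $N\subseteq S(C)$, every $g\colon U\to C$ with $\Im g\subseteq\sum_{f\in N}\Im f$ already lies in $N$ as an element of the module $S(C)$ --- an ``element of a submodule'' conclusion, not a ``factors through a finite stage'' conclusion. Your sketch replaces this delicate step with a false compactness claim.

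The second gap is exactness of $Q$, which you derive ``by a diagram chase using that its right adjoint $S$ is fully faithful with invertible counit.'' That implication is false in general, and the paper's own \cref{79237878} essentially contains the counterexample: $Q=-\otimes_{k[x]}(k[x]/(x))$ is cocontinuous, its right adjoint $\Hom_{k}(k[x]/(x),-)$ is fully faithful with invertible counit, and yet $Q$ is not left exact (correspondingly, $\Ker Q$ is not closed under subobjects). In the Gabriel--Popescu theorem, exactness of $Q$ is a second substantive application of the key lemma, exploiting the specific form $S=\Hom_{\cC}(U,-)$; it is not a formal consequence of full faithfulness of $S$. Once the key lemma and exactness are actually in hand, the remaining assertions --- density, the equivalence $(\Mod\Lambda)/\cX\isoto\cC$, and the $\Hom/\Ext^{1}$ description of the essential image of $S$ --- do follow formally as you say.
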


\begin{convention}\label{87100423}
	For a Grothendieck category $\cC$, we will refer to $(\Mod\Lambda,Q,S)$ in \cref{42902344} as a \emph{Gabriel-Popescu embedding} for $\cC$. Note that the unit morphism $\eta_{\Lambda}\colon\Lambda\to SQ(\Lambda)$ is an isomorphism because $\Lambda=S(U)$ and $Q\circ S\cong\id_{\Mod\Lambda}$.
\end{convention}

\begin{theorem}[\cite{MR3873542}]\label{34641739}
	Let $\cC_{1}$ and $\cC_{2}$ be $R$-linear Grothendieck categories. For each $i=1,2$, take an $R$-algebra $\Lambda_{i}$ and a localizing subcategory $\cX_{i}$ of $\Mod\Lambda_{i}$ such that there is an $R$-linear equivalence $\cC_{i}\cong(\Mod\Lambda_{i})/\cX_{i}$. Then there is an $R$-linear equivalence
	\begin{equation*}
		\cC_{1}\boxtimes_{R}\cC_{2}\cong\dfrac{\Mod(\Lambda_{1}\otimes_{R}\Lambda_{2})}{\cX}
	\end{equation*}
	where $\cX$ is the smallest localizing subcategory of $\Mod(\Lambda_{1}\otimes_{R}\Lambda_{2})$ containing all $\Lambda_{1}\otimes_{R}\Lambda_{2}$-modules $B$ such that $B_{\Lambda_{1}}\in\cX_{1}$ or $B_{\Lambda_{2}}\in\cX_{2}$.
\end{theorem}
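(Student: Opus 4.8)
The plan is to compute both sides through their universal properties and to reduce everything to the free case $\Mod\Lambda_{1}\boxtimes_{R}\Mod\Lambda_{2}\cong\Mod(\Lambda_{1}\otimes_{R}\Lambda_{2})$, handling the two localizations one variable at a time. First I would establish this free case. By the universal property of $\boxtimes_{R}$ recalled above, for every $R$-linear cocomplete $\cE$ one has $\Cocont_{R}(\Mod\Lambda_{1}\boxtimes_{R}\Mod\Lambda_{2},\cE)\simeq\Cocont_{R}(\Mod\Lambda_{1},\Mod\Lambda_{2};\cE)$, and a bifunctor that is cocontinuous in each variable is determined by its value on $(\Lambda_{1},\Lambda_{2})$ together with the two commuting actions, i.e.\ by a $\Lambda_{1}\otimes_{R}\Lambda_{2}$-object of $\cE$; by the Eilenberg--Watts equivalence this datum is the same as a cocontinuous functor $\Mod(\Lambda_{1}\otimes_{R}\Lambda_{2})\to\cE$. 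Hence $\Mod\Lambda_{1}\boxtimes_{R}\Mod\Lambda_{2}$ and $\Mod(\Lambda_{1}\otimes_{R}\Lambda_{2})$ represent the same pseudofunctor $\cE\mapsto\Cocont_{R}(\Mod\Lambda_{1},\Mod\Lambda_{2};\cE)$, so the $2$-Yoneda lemma gives the identification, under which the canonical bifunctor becomes $(M_{1},M_{2})\mapsto M_{1}\otimes_{R}M_{2}$.

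The core step is a single-variable localization lemma: for any $R$-algebra $\Gamma$ and any $R$-linear Grothendieck category $\cA$, I claim that $\cA\boxtimes_{R}\Mod\Gamma$ is equivalent to the category $\Mod_{\Gamma}(\cA)$ of objects of $\cA$ equipped with a right $\Gamma$-action, compatibly with the free case $\Mod_{\Gamma}(\Mod\Lambda_{1})\cong\Mod(\Lambda_{1}\otimes_{R}\Gamma)$. This is again checked by the universal property, using $\Cocont_{R}(\Mod\Gamma,\cE)\simeq\{\Gamma\text{-objects of }\cE\}$ and that the free $\Gamma$-object functor $\cA\to\Mod_{\Gamma}(\cA)$, $A\mapsto A\otimes_{R}\Gamma$, has the expected universal property. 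The advantage of this description is that the forgetful functor $\Mod_{\Gamma}(\cA)\to\cA$ is exact, faithful, and creates colimits, so localization becomes \emph{pointwise}: if $\cA=\Mod\Lambda_{1}/\cX_{1}$ is a Gabriel quotient with exact quotient $\pi_{1}$, then $\Mod_{\Gamma}(\pi_{1})$ is an exact cocontinuous localization with fully faithful right adjoint, whence $\Mod_{\Gamma}(\cA)\cong\Mod_{\Gamma}(\Mod\Lambda_{1})/\cX_{1}^{\Gamma}$ with $\cX_{1}^{\Gamma}=\setwithcondition{B}{B_{\Lambda_{1}}\in\cX_{1}}$. Taking $\Gamma=\Lambda_{2}$, and identifying $\Mod_{\Gamma}(\pi_{1})$ with $\pi_{1}\boxtimes\id$, this yields $\cC_{1}\boxtimes_{R}\Mod\Lambda_{2}\cong\Mod(\Lambda_{1}\otimes_{R}\Lambda_{2})/\cX^{(1)}$, where $\cX^{(1)}=\setwithcondition{B}{B_{\Lambda_{1}}\in\cX_{1}}$.

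I would then apply the same lemma in the second variable. Applying it to $\cC_{2}=\Mod\Lambda_{2}/\cX_{2}$ with the Grothendieck category $\cC_{1}$ presents $\cC_{1}\boxtimes_{R}\cC_{2}$ as the Gabriel quotient of $\cC_{1}\boxtimes_{R}\Mod\Lambda_{2}$ by the localizing subcategory $\cZ$ of objects whose underlying $\Lambda_{2}$-structure lies in $\cX_{2}$. Pulling $\cZ$ back along the exact quotient $\Mod(\Lambda_{1}\otimes_{R}\Lambda_{2})\to\Mod(\Lambda_{1}\otimes_{R}\Lambda_{2})/\cX^{(1)}$, and using that a composite of two Gabriel quotients is the quotient by the preimage localizing subcategory, I obtain $\cC_{1}\boxtimes_{R}\cC_{2}\cong\Mod(\Lambda_{1}\otimes_{R}\Lambda_{2})/\cX$, where $\cX$ is the smallest localizing subcategory containing $\cX^{(1)}$ together with $\setwithcondition{B}{B_{\Lambda_{2}}\in\cX_{2}}$ — that is, exactly the smallest localizing subcategory containing every $B$ with $B_{\Lambda_{1}}\in\cX_{1}$ or $B_{\Lambda_{2}}\in\cX_{2}$, as asserted.

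The hard part will be the single-variable localization lemma, and within it the claim that $\pi_{1}\boxtimes\id$ is an exact Gabriel quotient with kernel exactly $\cX^{(1)}$. The difficulty is intrinsic: $\boxtimes_{R}$ is built from cocontinuous functors, which need not be exact, and a general $\Lambda_{1}\otimes_{R}\Lambda_{2}$-module is not a pure tensor, so neither exactness of $\pi_{1}\boxtimes\id$ nor the computation of its kernel is formal. The inclusion $\cX^{(1)}\subseteq\Ker(\pi_{1}\boxtimes\id)$ is easy, since $(\pi_{1}\boxtimes\id)(X_{1}\otimes_{R}M_{2})=\pi_{1}(X_{1})\otimes M_{2}=0$ for $X_{1}\in\cX_{1}$ and the kernel of an exact cocontinuous functor is localizing; the reverse inclusion and exactness are precisely what the $\Mod_{\Gamma}(\cA)$ description is designed to supply, by reducing them to the corresponding statements for the plain Gabriel quotient $\pi_{1}\colon\Mod\Lambda_{1}\to\cC_{1}$ through the exact, colimit-creating forgetful functor. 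Verifying that this reduction is legitimate — that forming $\Gamma$-objects commutes with Gabriel localization in the precise sense above — is the one genuinely technical point, and I would isolate it as a standalone lemma before assembling the two-step argument.
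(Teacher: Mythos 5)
First, a point of comparison: the paper does not give an independent proof of this theorem at all --- the statement is attributed to Lowen, Ramos Gonz\'alez, and Shoikhet, and the paper's proof consists of a two-line translation of \cite[Theorem~5.4]{MR3873542} (formulated for linear sites and topologies) into the present module-theoretic language. Your proposal is therefore a genuinely different enterprise: a from-scratch argument by iterated one-variable localization through categories of $\Gamma$-objects. Its first half is sound. The free case $\Mod\Lambda_{1}\boxtimes_{R}\Mod\Lambda_{2}\cong\Mod(\Lambda_{1}\otimes_{R}\Lambda_{2})$ is correct, and the one-variable lemma in the form you actually prove it --- localizing the $\cA$-factor against an untouched module category $\Mod\Gamma$ --- works: the adjunction between $\pi_{1}$ and its fully faithful right adjoint lifts to $\Gamma$-objects, exactness and full faithfulness are detected on underlying objects, so $\Mod_{\Gamma}(\pi_{1})$ is a Gabriel quotient with kernel $\setwithcondition{B}{B_{\Lambda_{1}}\in\cX_{1}}$. (The identification $\cA\boxtimes_{R}\Mod\Gamma\cong\Mod_{\Gamma}(\cA)$ itself needs the bar-resolution argument showing that cocontinuous functors out of $\Mod_{\Gamma}(\cA)$ are determined on free $\Gamma$-objects, but that is standard.)

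The gap is in the second step. ``Applying the same lemma in the second variable'' is not an application of the lemma you proved: there the localization $\Mod\Lambda_{2}\to\cC_{2}$ happens in the \emph{module-category} factor while the other factor is the general Grothendieck category $\cC_{1}$. Objects of $\cC_{1}\boxtimes_{R}\Mod\Lambda_{2}\cong\Mod_{\Lambda_{2}}(\cC_{1})$ are objects of $\cC_{1}$ with a $\Lambda_{2}$-action; they have no underlying $\Lambda_{2}$-module, so your description of $\cZ$ does not typecheck, and the forgetful-functor argument that computed the kernel in step one is unavailable. After pulling back to $\Mod(\Lambda_{1}\otimes_{R}\Lambda_{2})$, what must be shown is that the kernel of the composite quotient is \emph{no larger than} the localizing subcategory $\cX$ generated by the two preimages (the reverse inclusion is the easy one you note). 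This cannot be read off from the universal property naively, because a merely cocontinuous, non-exact functor out of $\Mod\Lambda_{2}$ that kills $\cX_{2}$ need not factor through $\pi_{2}$: $\Cocont_{R}(\cC_{2},\cE)$ is the subcategory of $\Cocont_{R}(\Mod\Lambda_{2},\cE)$ of functors inverting $\cX_{2}$-local \emph{isomorphisms}, a strictly stronger condition than annihilating $\cX_{2}$. Proving that a $(\Lambda_{1}\otimes_{R}\Lambda_{2})$-object of $\cE$ which inverts local isomorphisms in each variable separately inverts all $\cX$-local isomorphisms is precisely the content of the product-topology analysis in \cite{MR3873542}, and it is where the real work of the theorem lives. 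You correctly sense that the one-variable lemma hides a technical point, but you locate it in the wrong half: commuting $\Gamma$-objects past a Gabriel localization of the base is the easy direction; localizing the coefficient ring against a general Grothendieck base is the hard one, and your sketch does not supply it.
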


\begin{proof}
	The localizing subcategory $\cX_{i}$ defines a linear topology on the singleton $R$-linear category $\set{\Lambda_{i}}$ as in \cite[section~2]{MR3873542}, and the description of the tensor product follows from \cite[Theorem~5.4]{MR3873542}.
\end{proof}

\subsection{Roos categories}
\label{01314096}

\begin{definition}\label{09823809}
	Let $\cC$ be a cocomplete abelian category.
	\begin{enumerate}
		\item\label{09234092} $\cC$ is said to satisfy \emph{Ab4} if direct sums are exact, that is, for every small family of short exact sequences, its termwise direct sum is again a short exact sequence.
		\item\label{09823409} $\cC$ is said to satisfy \emph{Ab5} if filtered colimits are exact, that is, for every small filtered system of short exact sequences, its termwise filtered colimit is again a short exact sequence.
		\item\label{23978432} $\cC$ is said to satisfy \emph{Ab6} if the following condition is fulfilled: Let $M$ be an object in $\cC$ and let $\set{\set{L_{i}^{(j)}}_{i\in I_{j}}}_{j\in J}$ be a small family of (upward) filtered sets of subobjects of $M$. Then
		\begin{equation*}
			\bigcap_{j\in J}\bigg(\bigcup_{i\in I_{j}}L_{i}^{(j)}\bigg)=\bigcup_{\set{i(j)}_{j}\in\prod_{j\in J}I_{j}}\bigg(\bigcap_{j\in J}L_{i(j)}^{(j)}\bigg),
		\end{equation*}
		where the symbol $\bigcup$ means the filtered union of subobjects (that is, the sum of subobjects that are filtered).
	\end{enumerate}
	An abelian category with direct products is said to satisfy \emph{Ab4*} (\resp \emph{Ab5*}, \emph{Ab6*}) if the opposite category satisfies Ab4 (\resp Ab5, Ab6).
\end{definition}

Recall that a Grothendieck category is a cocomplete abelian category that satisfies Ab5 and has a generator. It is known that every Grothendieck category is also complete.

\begin{theorem}[{Roos \cite{MR0190207}; see also \cite[section~4.21]{MR0340375}}]\label{09243093}
	Let $(\Mod\Lambda,Q,S)$ be a Gabriel-Popescu embedding for a Grothendieck category $\cC$. Then the following are equivalent:
	\begin{enumerate}
		\item\label{02984504} $\cC$ satisfies Ab6 and Ab4*.
		\item\label{09842935} $\cX=\Ker Q$ is a bilocalizing subcategory of $\Mod\Lambda$.
		\item\label{42092340} $Q\colon\Mod\Lambda\to\cC$ has a left adjoint.
		\item\label{23092340} $Q\colon\Mod\Lambda\to\cC$ preserves direct products.
	\end{enumerate}
	If these equivalent conditions hold, then the left adjoint $H$ of $Q$ is fully faithful and thus satisfies $H\circ Q\cong\id_{\Mod\Lambda}$.
\end{theorem}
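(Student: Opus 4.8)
The plan is to dispatch the purely module-theoretic equivalences \cref{09842935} $\Leftrightarrow$ \cref{42092340} $\Leftrightarrow$ \cref{23092340} first, then derive \cref{02984504} from them, and finally treat the converse, which carries the real content. Throughout I use the standing properties of the Gabriel--Popescu embedding from \cref{08230321}: $Q$ is exact and dense with $QS\cong\id_{\cC}$, the functor $S$ is left exact and fully faithful, and $\cX=\Ker Q$ is localizing, with torsion radical $t$ and reflection $\ell\colon\id_{\Mod\Lambda}\to SQ$ whose components satisfy $\Ker\ell_{M}=tM\in\cX$ and $\Cok\ell_{M}\in\cX$. I also use freely that $\Mod\Lambda$ has exact products and satisfies Ab6, that $S$ preserves products, and that the essential image of $S$ (the local objects described in \cref{08230321}) is closed under products.

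For \cref{42092340} $\Leftrightarrow$ \cref{23092340}: a left adjoint preserves all limits, giving one direction; conversely, $Q$ is exact, hence preserves finite limits, and together with products this gives preservation of all small limits, so the special adjoint functor theorem (using that $\Mod\Lambda$ is complete, well-powered, with an injective cogenerator) produces a left adjoint $H$. For \cref{09842935} $\Leftrightarrow$ \cref{23092340}: if $Q$ preserves products then $X_{\alpha}\in\cX$ forces $Q(\prod_{\alpha}X_{\alpha})\cong\prod_{\alpha}QX_{\alpha}=0$, so $\cX$ is closed under products and hence bilocalizing. Conversely, given $\{M_{\alpha}\}$ I apply $\prod_{\alpha}\ell_{M_{\alpha}}$; since products are exact in $\Mod\Lambda$, its kernel is $\prod_{\alpha}tM_{\alpha}$ and its cokernel is $\prod_{\alpha}\Cok\ell_{M_{\alpha}}$, both products of objects of $\cX$, hence in $\cX$. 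Applying the exact, $\cX$-annihilating $Q$ turns $\prod\ell_{M_{\alpha}}$ into an isomorphism, and using $\prod_{\alpha}SQM_{\alpha}=S(\prod^{\cC}_{\alpha}QM_{\alpha})$ together with $QS\cong\id$ this reads $Q(\prod_{\alpha}M_{\alpha})\cong\prod^{\cC}_{\alpha}QM_{\alpha}$, i.e. \cref{23092340}.

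Assuming \cref{23092340}, I prove \cref{02984504}. For Ab4*, take short exact sequences $0\to A_{\alpha}\to B_{\alpha}\to C_{\alpha}\to0$ in $\cC$ and apply the left exact $S$; with $I_{\alpha}=\Im(SB_{\alpha}\to SC_{\alpha})$, exactness of $Q$ and $QS\cong\id$ give $\Cok(SB_{\alpha}\to SC_{\alpha})=:W_{\alpha}\in\cX$, so there are short exact sequences $0\to SA_{\alpha}\to SB_{\alpha}\to I_{\alpha}\to0$ and $0\to I_{\alpha}\to SC_{\alpha}\to W_{\alpha}\to0$. Products are exact in $\Mod\Lambda$, and applying the exact, product-preserving, $\cX$-annihilating $Q$ to the two product sequences yields $0\to\prod^{\cC}A_{\alpha}\to\prod^{\cC}B_{\alpha}\to\prod^{\cC}C_{\alpha}\to0$, which is Ab4*. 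For Ab6, I use the standard lattice translation: for $M\in\cC$ and $N:=SM$, the subobjects of $M$ correspond to the $\cX$-saturated subobjects of $N$ (those with $\cX$-torsion-free quotient), intersections going to intersections and filtered joins to the saturation $\overline{(-)}$ of the sum. Writing $A_{j}=\sum_{i}\tilde{L}^{(j)}_{i}$ for the transported filtered families, Ab6 in $\Mod\Lambda$ gives $\bigcap_{j}A_{j}=\sum_{\phi}\bigcap_{j}\tilde{L}^{(j)}_{\phi(j)}=:B$, so the Ab6 identity for $M$ becomes $\bigcap_{j}\overline{A_{j}}=\overline{B}$. The inclusion $\overline{B}\subseteq\bigcap_{j}\overline{A_{j}}$ is automatic; for the reverse, the embedding $N/B\into\prod_{j}N/A_{j}$ carries $(\bigcap_{j}\overline{A_{j}})/B$ into $\prod_{j}t(N/A_{j})$, which lies in $\cX$ because $\cX$ is closed under products, whence $(\bigcap_{j}\overline{A_{j}})/B\in\cX$ and $\bigcap_{j}\overline{A_{j}}\subseteq\overline{B}$. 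This gives Ab6, so \cref{23092340} $\Rightarrow$ \cref{02984504}.

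The hard part is \cref{02984504} $\Rightarrow$ \cref{23092340}: I must extract closure of $\cX$ under products from Ab6 and Ab4*. The natural route is to reverse the previous computation, realizing a putative product $\prod_{\alpha}X_{\alpha}$ of objects of $\cX$ as the defect $(\bigcap_{j}\overline{A_{j}})/(\bigcap_{j}A_{j})$ of a suitable Ab6 configuration on a local object $N$, arranged so that $t(N/A_{j})\cong X_{j}$, and using Ab4* to keep the defining sequences exact after taking products so that the defect is exactly $\prod_{\alpha}X_{\alpha}$; Ab6 then forces it into $\cX$. Equivalently, in terms of the Gabriel topology $\cF$ attached to $\cX$, one shows Ab6 forces $\cF$ to be closed under arbitrary intersections, hence to possess a least element, a two-sided idempotent ideal $I$ with $\cX=\Mod(\Lambda/I)$, which is bilocalizing by \cref{42902344}. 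I expect constructing the witnessing configuration (equivalently, the intersection-closure of $\cF$) to be the main obstacle, and here I would follow Roos's original argument underlying \cref{62264625}. Finally, when the conditions hold, $H$ is fully faithful: the triangle identity for $H\dashv Q$ evaluated at $SC$ exhibits a retraction of the unit $\eta_{C}\colon C\to QHC$, so $\eta$ is a split monomorphism, and since $\cX$ is bilocalizing $H$ identifies $\cC$ with the colocalizing subcategory ${}^{\perp}\cX$ on which $Q$ restricts to an equivalence; thus $\eta$ is invertible, i.e. $H$ is fully faithful.
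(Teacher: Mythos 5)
The paper offers no proof of \cref{09243093}: it is quoted from Roos's note \cite{MR0190207} and from \cite[section~4.21]{MR0340375}, so there is no internal argument to compare yours against. Judged on its own terms, your proposal correctly establishes $\text{\cref{09842935}}\Leftrightarrow\text{\cref{42092340}}\Leftrightarrow\text{\cref{23092340}}$, the implication from these to \cref{02984504}, and the final fully-faithfulness claim. The ingredients you invoke are the right ones: exactness of products in $\Mod\Lambda$, the fact that the unit $\ell_M\colon M\to SQ(M)$ has kernel and cokernel in $\cX$, preservation of products by the right adjoint $S$, the lattice translation between subobjects in $\cC$ and saturated subobjects in $\Mod\Lambda$ for Ab6, and the adjoint functor theorem for \cref{23092340}$\Rightarrow$\cref{42092340}. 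Incidentally, you silently correct what must be a typo in the statement: $H\circ Q\cong\id_{\Mod\Lambda}$ cannot hold unless $\cX=0$ (since $HQ$ kills $\cX$); what is meant, and what you prove, is $Q\circ H\cong\id_{\cC}$, consistent with how the theorem is used in the proof of \cref{68013908}.

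The genuine gap is $\text{\cref{02984504}}\Rightarrow\text{\cref{09842935}}$, which is the substantive content of Roos's theorem, and here you do not give a proof: you describe two strategies (realize $\prod_{\alpha}X_{\alpha}$ as the Ab6 defect of a configuration on a local object; or show the Gabriel filter attached to $\cX$ has a smallest element, necessarily an idempotent ideal, and invoke \cref{42902344}) and then explicitly say you would ``follow Roos's original argument.'' Neither sketch is executable as written. For the first, the defect $(\bigcap_{j}\overline{A_{j}})/(\bigcap_{j}A_{j})$ embeds into $\prod_{j}t(N/A_{j})$ but is in general a proper subobject, and Ab6 only constrains configurations built from genuinely filtered families $\set{L_{i}^{(j)}}_{i}$; so one must construct, for arbitrary $X_{j}\in\cX$, filtered families of subobjects of a single object of $\cC$ whose intersection/union data recovers $\prod_{j}X_{j}$ as a subquotient of the defect. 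That construction is the hard step and it is absent. For the second, deducing from Ab6 that the Gabriel filter is closed under arbitrary intersections is again precisely Roos's argument, not a reduction of it. So your write-up, like the paper, ultimately treats the key direction as a citation; the difference is only that you supply complete proofs of the remaining, easier implications.
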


\begin{definition}\label{09874917}
	We call a Grothendieck category $\cC$ a \emph{Roos category} if it satisfies Ab6 and Ab4*.
	\begin{table}[H]
		\begin{tabular}{ccccccccc}
			\toprule
			& cocomplete & Ab4 & Ab5 & Ab6 & complete & Ab4* & Ab5* & Ab6* \\
			\midrule
			Grothendieck category	& \checkmark & \checkmark & \checkmark & & \checkmark & & & \\
			Roos category			& \checkmark & \checkmark & \checkmark & \checkmark & \checkmark & \checkmark & & \\
			\bottomrule
		\end{tabular}
	\end{table}
\end{definition}

\begin{remark}\label{23409530}
	The category of modules over an arbitrary ring is a Roos category.
	
	There exists a nonzero Roos category that admits no projective objects other than zero (\cite[Example~3]{MR0215895}; see also \cite[section~4]{MR2197371}).
\end{remark}

\begin{remark}\label{45309230}
	Let $\cC$ be a Roos category, and let $(\Mod\Lambda,Q,S)$ be a Gabriel-Popescu embedding for $\cC$. By \cref{09243093}, $\cC$ is equivalent to $(\Mod\Lambda)/\cX$, where $\cX:=\Ker Q$. Moreover, by \cref{42902344}, $\cX=\Mod(\Lambda/I)$ for some idempotent ideal $I\subset\Lambda$. Thus
	\begin{equation*}
		\cC\cong\frac{\Mod\Lambda}{\Mod(\Lambda/I)}.
	\end{equation*}
	Its dual is obtained by taking the opposite ring (\cite[Proposition~8.6]{QuillenPreprint}):
	\begin{equation}\label{41781078}
		\cC^{*}\cong\frac{\Mod\Lambda^{\op}}{\Mod(\Lambda^{\op}/I)}.
	\end{equation}
	
	Conversely, for a ring $\Lambda$ and an idempotent ideal $I\subset\Lambda$, the subcategory $\Mod(\Lambda/I)\subset\Mod\Lambda$ is bilocalizing, so the quotient category $(\Mod\Lambda)/(\Mod(\Lambda/I))$ is a Roos category (see \cite[Theorem~4.21.6]{MR0340375}).
	
	Therefore, if $\cC$ is a Roos category, its dual $\cC^{*}$ is also a Roos category (because it is of the form \cref{41781078}).
\end{remark}

\section{Main theorem}
\label{80319486}

Let $R$ be a commutative ring. The first observation in this section is that every Roos category is dualizable, and its dual is again a Roos category (hence a Grothendieck category). This follows immediate from the following result in \cite{MR3361309}:

\begin{theorem}[{\cite[Corollary~3.4]{MR3361309}}]\label{07640566}
	Let $\cM$ and $\cC$ be locally presentable $R$-linear categories and suppose that $\cM$ is dualizable. If there are cocontinuous functors $Q\colon\cM\to\cC$ and $H\colon\cC\to\cM$ such that $Q\circ H\cong\id_{\cC}$, then $\cC$ is also dualizable.
\end{theorem}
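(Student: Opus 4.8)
The plan is to recognize the hypothesis as exhibiting $\cC$ as a \emph{retract} of $\cM$ in the symmetric monoidal $2$-category $\mathrm{Pr}^{\mathrm{L}}_R$ whose objects are locally presentable $R$-linear categories, whose $1$-morphisms are cocontinuous functors, and whose $2$-morphisms are natural transformations, with monoidal product $\boxtimes_R$ and unit $\Mod R$. The universal property of $\boxtimes_R$ recalled above makes this $2$-category closed, with internal hom $\Cocont_R(-,-)$, so that $\cC^*=\Cocont_R(\cC,\Mod R)$ is the internal dual $[\cC,\Mod R]$. In these terms the functors $H\colon\cC\to\cM$ and $Q\colon\cM\to\cC$ together with the invertible $2$-cell $Q\circ H\cong\id_\cC$ exhibit $\cC$ as a retract of $\cM$, with section $H$ and retraction $Q$. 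The key principle I would invoke is the standard fact that a retract of a dualizable object in a symmetric monoidal category is again dualizable.

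First I would record the duality data for $\cM$: since $\cM$ is dualizable, there are a dual $\cM^\vee$ (canonically $[\cM,\Mod R]$), a coevaluation $\eta\colon\Mod R\to\cM\boxtimes_R\cM^\vee$, and an evaluation $\varepsilon\colon\cM^\vee\boxtimes_R\cM\to\Mod R$ satisfying the two triangle (zigzag) identities up to coherent natural isomorphism. I would then transport this data along the retract, setting the dual of $\cC$ to be $\cM^\vee$ and defining
\begin{equation*}
	\eta_\cC := (Q\boxtimes_R\id_{\cM^\vee})\circ\eta, \qquad \varepsilon_\cC := \varepsilon\circ(\id_{\cM^\vee}\boxtimes_R H).
\end{equation*}
The verification of the first zigzag for $\cC$ reduces, after commuting the independent factors $Q$ and $H$ past one another by interchange and applying naturality of $\eta$ and $\varepsilon$, to $Q\circ(\text{zigzag for }\cM)\circ H\cong Q\circ\id_\cM\circ H=Q\circ H\cong\id_\cC$; the second zigzag is symmetric, using the retraction identity on the other factor. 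This shows that $\cC$ is dualizable in the abstract (triangle-identity) sense, with dual $\cM^\vee$.

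Finally I would reconcile this with the definition used here, namely that the canonical functor $\cC^*\boxtimes_R\cC\to\Cocont_R(\cC,\cC)$ is an equivalence. For this I would invoke the standard criterion in a closed symmetric monoidal category: an object is dualizable in the triangle-identity sense if and only if the canonical map $[\cC,\Mod R]\boxtimes_R\cC\to[\cC,\cC]$ is an isomorphism, and when this holds the abstract dual is canonically identified with $[\cC,\Mod R]=\cC^*$. Since this canonical map is precisely the functor appearing in the definition of dualizability, the previous step delivers the desired equivalence.

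I expect the main obstacle to be the $2$-categorical coherence in the zigzag verification: the triangle identities hold only up to invertible $2$-cell, so the factor-interchange and naturality steps must be organized as a pasting of $2$-cells rather than a chain of equalities, and one must check that these paste to the coherence isomorphism witnessing $Q\circ H\cong\id_\cC$. A secondary point requiring care is confirming that the abstractly produced dual $\cM^\vee$ agrees with the specific $\cC^*=\Cocont_R(\cC,\Mod R)$ named in the definition; this is exactly where the closed structure and the one-object dualizability criterion are needed.
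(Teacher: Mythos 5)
First, note that the paper does not prove this statement at all: it is imported verbatim from \cite[Corollary~3.4]{MR3361309}, so there is no in-paper proof to compare against, and your proposal has to be judged on its own terms. Your overall strategy --- exhibit $\cC$ as a retract of $\cM$ in the symmetric monoidal $2$-category of locally presentable $R$-linear categories and invoke closure of dualizability under retracts --- is the right idea and is essentially what underlies the cited corollary.

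However, the execution has a genuine gap at the second zigzag. With your choices $\eta_{\cC}:=(Q\boxtimes_{R}\id_{\cM^{\vee}})\circ\eta$ and $\varepsilon_{\cC}:=\varepsilon\circ(\id_{\cM^{\vee}}\boxtimes_{R}H)$, the second triangle composite on $\cM^{\vee}$ unwinds (after interchange and naturality) to
\begin{equation*}
	(\varepsilon\boxtimes_{R}\id_{\cM^{\vee}})\circ\bigl(\id_{\cM^{\vee}}\boxtimes_{R}(H\circ Q)\boxtimes_{R}\id_{\cM^{\vee}}\bigr)\circ(\id_{\cM^{\vee}}\boxtimes_{R}\eta),
\end{equation*}
which would require $H\circ Q\cong\id_{\cM}$ to collapse to $\id_{\cM^{\vee}}$; but the hypothesis only gives $Q\circ H\cong\id_{\cC}$, and $H\circ Q$ is merely an idempotent up to isomorphism. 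There is no ``retraction identity on the other factor'' to use. Concretely, your recipe already fails for $\cC=0$ and $\cM=\Mod R$: it would declare $\Mod R$ to be the dual of the zero category. The correct dual is not $\cM^{\vee}$ but the splitting of the idempotent $(H\circ Q)^{\vee}$ on $\cM^{\vee}$, i.e.\ $\cC^{*}=\Cocont_{R}(\cC,\Mod R)$, which is a retract of $\cM^{*}$ via precomposition with $H$ and $Q$; with $\eta_{\cC}:=(Q\boxtimes_{R}H^{*})\circ\eta$ and $\varepsilon_{\cC}:=\varepsilon\circ(Q^{*}\boxtimes_{R}H)$ both zigzags do close up (each verification uses the transpose-compatibility of $\varepsilon$ together with $Q\circ H\cong\id_{\cC}$, once on the $\cC$ factor and once on the $\cC^{*}$ factor). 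Alternatively --- and more in the spirit of your own last paragraph --- you can avoid zigzags entirely: the canonical functor $\cC^{*}\boxtimes_{R}\cC\to\Cocont_{R}(\cC,\cC)$ is, by naturality in both variables, a retract (up to natural isomorphism) of the corresponding equivalence $\cM^{*}\boxtimes_{R}\cM\to\Cocont_{R}(\cM,\cM)$, using $H^{*},Q^{*}$ on the contravariant slot and $F\mapsto H\circ F\circ Q$, $G\mapsto Q\circ G\circ H$ on the hom side; a retract of an equivalence is an equivalence, which is exactly the paper's definition of dualizability. Either repair is routine, but as written the duality data you propose does not satisfy both triangle identities.
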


\begin{corollary}\label{68013908}
	Every $R$-linear Roos category $\cC$ is dualizable. The dual of $\cC$ is also a Roos category.
\end{corollary}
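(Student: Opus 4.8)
The plan is to exhibit $\cC$ as a cocontinuous retract of a module category and then invoke \cref{07640566}. First I would fix a generator $U\in\cC$ and form the Gabriel-Popescu embedding $(\Mod\Lambda,Q,S)$ of \cref{08230321}, where $\Lambda=\End_{\cC}(U)$ and $Q\colon\Mod\Lambda\to\cC$ is the exact left adjoint of $S$. As recalled after the definition of dualizability, $\Mod\Lambda$ is dualizable, since $\Lambda$ is a compact projective generator; it will therefore serve as the dualizable category $\cM$ in \cref{07640566}, with $Q$ playing the role of the cocontinuous functor $\cM\to\cC$.

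The role of the Roos hypothesis is precisely to supply a \emph{cocontinuous} section of $Q$. Indeed, since $\cC$ satisfies Ab6 and Ab4*, \cref{09243093} provides a left adjoint $H\colon\cC\to\Mod\Lambda$ of $Q$ which is fully faithful. Being a left adjoint, $H$ is cocontinuous, and $Q$ is cocontinuous as a left adjoint of $S$; full faithfulness of $H$ means the unit of the adjunction $H\dashv Q$ is an isomorphism, i.e.\ $Q\circ H\cong\id_{\cC}$. Here it is worth emphasizing that the Gabriel-Popescu right adjoint $S$ already satisfies $Q\circ S\cong\id_{\cC}$, but $S$ is only left exact and need not be cocontinuous, so the content of the Roos condition is exactly that it replaces $S$ by a cocontinuous retraction $H$. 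With $\cM=\Mod\Lambda$, $Q$, and $H$ now in hand, \cref{07640566} immediately yields that $\cC$ is dualizable.

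For the second assertion, that $\cC^{*}$ is again a Roos category, I would simply appeal to \cref{45309230}: combining \cref{09243093} with \cref{42902344} writes $\cC\cong(\Mod\Lambda)/\Mod(\Lambda/I)$ for an idempotent ideal $I\subset\Lambda$, and the dual is then $\cC^{*}\cong(\Mod\Lambda^{\op})/\Mod(\Lambda^{\op}/I)$, which is of the form to which the converse direction of \cref{45309230} applies, hence a Roos category. There is essentially no genuine obstacle beyond bookkeeping; the only point demanding care is the direction of the adjunction $H\dashv Q$ and the verification that it is the composite $Q\circ H$ landing in $\cC$ (rather than $S$, which is not cocontinuous) that equals the identity, so that the hypotheses of \cref{07640566} are met.
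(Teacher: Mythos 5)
Your proposal is correct and follows essentially the same route as the paper: take a Gabriel--Popescu embedding, use \cref{09243093} to obtain the fully faithful cocontinuous left adjoint $H$ of $Q$ with $Q\circ H\cong\id_{\cC}$, apply \cref{07640566} with $\cM=\Mod\Lambda$, and conclude the statement about $\cC^{*}$ from \cref{45309230}. The extra explanation you give (why $H$ rather than $S$ must be used, and why full faithfulness of the left adjoint gives $Q\circ H\cong\id_{\cC}$) is accurate and merely makes explicit what the paper leaves implicit.
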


\begin{proof}
	Let $(\Mod\Lambda,Q,S)$ be a Gabriel-Popescu embedding for $\cC$. Then $Q$ has a left adjoint $H$, and $Q\circ H\cong\id_{\cC}$. Therefore, $\cC$ is dualizable by \cref{07640566}. The dual $\cC^{*}$ is also a Roos category by \cref{45309230}.
\end{proof}

Next we will show that a dualizable Grothendieck category whose dual is again a Grothendieck category is a Roos category.

\begin{lemma}\label{85148201}
	Let $\cC$ be a dualizable $R$-linear Grothendieck category whose dual is again a Grothendieck category. Let $(\Mod\Lambda,Q,S)$ be a Gabriel-Popescu embedding for $\cC$. Then there is an $R$-linear cocontinuous functor $H\colon\cC\to\Mod\Lambda$ such that $Q\circ H\cong\id_{\cC}$.
\end{lemma}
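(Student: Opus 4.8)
The plan is to recover $H$ from the dualizability equivalence, by recognizing $\id_{\cC}$ as coming from a functor that factors through $Q$. I use throughout that, because $\cC$ is dualizable, the canonical functor $\cC^{*}\boxtimes_{R}\cD\to\Cocont_{R}(\cC,\cD)$ sending $F\boxtimes_{R}D$ to $(C\mapsto F(C)\otimes_{R}D)$ is an equivalence not merely for $\cD=\cC$ but for every cocomplete $R$-linear category $\cD$; this is the standard consequence of an object being dualizable in a symmetric monoidal category. Applying it to $\cD=\cC$ and to $\cD=\Mod\Lambda$ yields equivalences $\Cocont_{R}(\cC,\cC)\cong\cC^{*}\boxtimes_{R}\cC$ and $\Cocont_{R}(\cC,\Mod\Lambda)\cong\cC^{*}\boxtimes_{R}\Mod\Lambda$.

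First I would identify postcomposition with $Q$. The functor $Q\circ(-)\colon\Cocont_{R}(\cC,\Mod\Lambda)\to\Cocont_{R}(\cC,\cC)$ corresponds, under the two equivalences above, to $\id_{\cC^{*}}\boxtimes_{R}Q\colon\cC^{*}\boxtimes_{R}\Mod\Lambda\to\cC^{*}\boxtimes_{R}\cC$. One checks this on objects of the form $F\boxtimes_{R}m$: the first route gives $(C\mapsto F(C)\otimes_{R}Q(m))$, while the second gives $(C\mapsto Q(F(C)\otimes_{R}m))$, and these agree via the natural isomorphism $Q(X\otimes_{R}m)\cong X\otimes_{R}Q(m)$ for $X\in\Mod R$, which holds because both sides are cocontinuous in $X$ and agree at $X=R$.

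The crucial step is to show that $\id_{\cC^{*}}\boxtimes_{R}Q$ is essentially surjective, and this is where the hypotheses enter. By the Gabriel-Popescu embedding \cref{08230321}, $Q$ is the quotient functor $\Mod\Lambda\to(\Mod\Lambda)/\cX\cong\cC$ for the localizing subcategory $\cX=\Ker Q$. Since $\cC^{*}$ is assumed to be a Grothendieck category, it likewise admits a presentation $\cC^{*}\cong(\Mod\Gamma)/\cY$ through a Gabriel-Popescu embedding. Feeding both presentations into \cref{34641739}, the categories $\cC^{*}\boxtimes_{R}\Mod\Lambda$ and $\cC^{*}\boxtimes_{R}\cC$ are identified with quotients $\Mod(\Gamma\otimes_{R}\Lambda)/\cY_{1}$ and $\Mod(\Gamma\otimes_{R}\Lambda)/\cY_{2}$ of one and the same module category, where $\cY_{1}\subseteq\cY_{2}$ are the localizing subcategories generated by the bimodules whose restriction lies in $\cY$, respectively in $\cY$ or in $\cX$. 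By the naturality of that description, $\id_{\cC^{*}}\boxtimes_{R}Q$ becomes the canonical quotient functor $\Mod(\Gamma\otimes_{R}\Lambda)/\cY_{1}\to\Mod(\Gamma\otimes_{R}\Lambda)/\cY_{2}$, which is essentially surjective.

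Finally I would extract $H$. Let $\omega\in\cC^{*}\boxtimes_{R}\cC$ be the object corresponding to $\id_{\cC}\in\Cocont_{R}(\cC,\cC)$. By the essential surjectivity just established, choose $\omega'\in\cC^{*}\boxtimes_{R}\Mod\Lambda$ with $(\id_{\cC^{*}}\boxtimes_{R}Q)(\omega')\cong\omega$, and let $H\colon\cC\to\Mod\Lambda$ be the $R$-linear cocontinuous functor corresponding to $\omega'$. Then $Q\circ H$ corresponds to $(\id_{\cC^{*}}\boxtimes_{R}Q)(\omega')\cong\omega$, hence to $\id_{\cC}$; since the equivalence $\Cocont_{R}(\cC,\cC)\cong\cC^{*}\boxtimes_{R}\cC$ reflects isomorphisms, this gives $Q\circ H\cong\id_{\cC}$, as required. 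I expect the main obstacle to be precisely the essential-surjectivity step: the formal manipulation of the dualizability equivalences is routine, but turning the tensored map $\id_{\cC^{*}}\boxtimes_{R}Q$ into an honest Gabriel quotient of module categories is exactly the point where both the Grothendieck hypothesis on $\cC^{*}$ and the quotient description of $Q$ are needed, through \cref{34641739}.
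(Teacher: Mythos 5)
Your proposal is correct and follows essentially the same route as the paper's own proof: identify postcomposition with $Q$ as $\id_{\cC^{*}}\boxtimes_{R}Q$ via the dualizability equivalences of \cite[Lemma~3.1]{MR3361309}, then use a Gabriel--Popescu presentation of $\cC^{*}$ together with \cref{34641739} to realize $\id_{\cC^{*}}\boxtimes_{R}Q$ as a canonical quotient functor between quotients of $\Mod(\Gamma\otimes_{R}\Lambda)$ by nested localizing subcategories, hence essentially surjective. The only differences are cosmetic (you spell out the object-level check of the commuting square, which the paper leaves implicit).
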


\begin{proof}
	We have the commutative diagram
	\begin{equation*}
		\begin{tikzcd}
			(\Mod\Lambda)\boxtimes_{R}\cC^{*}\ar[d,"Q\boxtimes\id_{\cC^{*}}"']\ar[r] & \Cocont_{R}(\cC,\Mod\Lambda)\ar[d,"Q_{*}"] \\
			\cC\boxtimes_{R}\cC^{*}\ar[r] & \Cocont_{R}(\cC,\cC)
		\end{tikzcd}
	\end{equation*}
	where the horizontal arrows are canonical functors and $Q_{*}$ is the induced functor defined by $Q_{*}(F):=Q\circ F$. If $Q_{*}$ is dense, then there is $H\in\Cocont_{R}(\cC,\Mod\Lambda)$ such that $Q\circ H\cong\id_{\cC}$, which is the desired claim. Since $\Mod\Lambda$ and $\cC$ are dualizable, both horizontal arrows are equivalences by \cite[Lemma~3.1]{MR3361309}. Hence, it suffices to prove that $Q\boxtimes\id_{\cC^{*}}$ is dense.
	
	Let $(\Mod\Gamma, Q', S')$ be a Gabriel-Popescu embedding for $\cC^{*}$, and let $\cX:=\Ker Q$ and $\cX':=\Ker Q'$. By \cref{34641739},
	\begin{equation*}
		\cC\boxtimes_{R}\cC^{*}\cong\dfrac{\Mod\Lambda}{\cX}\boxtimes_{R}\dfrac{\Mod\Gamma}{\cX'}\cong\dfrac{\Mod(\Lambda\otimes_{R}\Gamma)}{\cY},
	\end{equation*}
	where $\cY$ is the smallest localizing subcategories of $\Mod(\Lambda\otimes_{R}\Gamma)$ containing all $B\in\Mod(\Lambda\otimes_{R}\Gamma)$ such that $B_{\Lambda}\in\cX$ or $B_{\Gamma}\in\cX'$. Similarly, we have
	\begin{equation*}
		(\Mod\Lambda)\boxtimes_{R}\cC^{*}\cong(\Mod\Lambda)\boxtimes_{R}\dfrac{\Mod\Gamma}{\cX'}\cong\dfrac{\Mod(\Lambda\otimes_{k}\Gamma)}{\cZ},
	\end{equation*}
	where $\cZ$ is the smallest localizing subcategories of $\Mod(\Lambda\otimes_{R}\Gamma)$ containing all $B\in\Mod(\Lambda\otimes_{R}\Gamma)$ such that $B_{\Gamma}\in\cX'$. Obviously, $\cY\subset\cZ$, and we obtain the commutative diagram
	\begin{equation*}
		\begin{tikzcd}
			(\Mod\Lambda)\boxtimes_{R}\cC^{*}\ar[d,"Q\boxtimes_{R}\id_{\cC^{*}}"']\ar[r,"\sim"] & (\Mod\Lambda)\boxtimes_{R}\dfrac{\Mod\Gamma}{\cX'}\ar[d]\ar[r,"\sim"] & \dfrac{\Mod(\Lambda\otimes_{R}\Gamma)}{\cY}\ar[d]\\
			\cC\boxtimes_{R}\cC^{*}\ar[r,"\sim"] & \dfrac{\Mod\Lambda}{\cX}\boxtimes_{R}\dfrac{\Mod\Gamma}{\cX'}\ar[r,"\sim"] & \dfrac{\Mod(\Lambda\otimes_{R}\Gamma)}{\cZ}\rlap{.}
		\end{tikzcd}
	\end{equation*}
	The canonical functor represented by the right-most vertical arrow is dense, and hence $Q\boxtimes\id_{\cC^{*}}$ is also dense.
\end{proof}

\begin{lemma}\label{72974304}
	Let $\cC$ be an $R$-linear Grothendieck category, and let $(\Mod\Lambda,Q,S)$ be a Gabriel-Popescu embedding for $\cC$. If there is an $R$-linear cocontinuous functor $H\colon\cC\to\Mod\Lambda$ such that $Q\circ H\cong\id_{\cC}$, then $\cC$ is a Roos category.
\end{lemma}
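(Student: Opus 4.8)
The plan is to verify Roos's criterion \cref{09243093}: since that theorem applies verbatim to our Gabriel--Popescu embedding, it suffices to prove that $\cX:=\Ker Q$ is a \emph{bilocalizing} subcategory of $\Mod\Lambda$, for then $\cC$ satisfies Ab6 and Ab4*, i.e.\ is a Roos category. Because $Q$ is exact and is a left adjoint (\cref{08230321}), $\cX$ is automatically closed under subobjects, quotients, extensions and direct sums, hence is localizing; so the entire problem reduces to showing that $\cX$ is in addition closed under direct products. By the classification in \cref{42902344}, this will follow once I produce a two-sided ideal $I\subseteq\Lambda$ with $\cX=\Mod(\Lambda/I)$, the ideal then being automatically idempotent and $\Mod(\Lambda/I)$ a closed subcategory.

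To build this ideal I would feed the cocontinuity of $H$ through the Eilenberg--Watts theorem. The composite $HQ\colon\Mod\Lambda\to\Mod\Lambda$ is $R$-linear and cocontinuous (as $Q$ is a left adjoint and $H$ is cocontinuous), so Eilenberg--Watts supplies a $\Lambda$-$\Lambda$-bimodule $B$ together with a natural isomorphism $HQ\cong -\otimes_\Lambda B$, where $B=HQ(\Lambda)=H(U)$. The unit of the adjunction $Q\dashv S$ at $H(U)$, composed with the isomorphism $SQH(U)\cong S(U)=\Lambda$ coming from $Q\circ H\cong\id_\cC$, yields a homomorphism of $\Lambda$-$\Lambda$-bimodules $u\colon B\to\Lambda$ such that $Q(u)$ is an isomorphism (by the triangle identities for $Q\dashv S$). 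Setting $I:=\Im(u)$, a two-sided ideal of $\Lambda$, the cokernel satisfies $\Lambda/I\cong\Cok(u)\in\Ker Q=\cX$; since $\cX$ is localizing this gives $\Mod(\Lambda/I)\subseteq\cX$.

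The key step, and where I expect the real content to lie, is the reverse inclusion $\cX\subseteq\Mod(\Lambda/I)$. Here the decisive observation is that for every $X\in\cX$ one has $X\otimes_\Lambda B\cong HQ(X)=H(0)=0$; applying the right-exact functor $X\otimes_\Lambda-$ to the surjection $B\onto I$ then forces $X\otimes_\Lambda I=0$, so the multiplication map $X\otimes_\Lambda I\to X$ vanishes and $XI=0$, i.e.\ $X\in\Mod(\Lambda/I)$. This establishes $\cX=\Mod(\Lambda/I)$, which is localizing and closed, hence bilocalizing, and \cref{09243093} concludes that $\cC$ is a Roos category. The main obstacle is exactly to annihilate all of $\cX$ by one ideal: a naive $\Hom$-theoretic attempt (trying to show $\Hom_\Lambda(H(U),X)=0$ directly) runs into $\Ext^{1}$-type lifting obstructions, so the crucial idea is to route the argument through the tensor-vanishing $X\otimes_\Lambda B=0$ furnished by Eilenberg--Watts rather than through a Hom computation.
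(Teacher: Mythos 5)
Your proposal is correct and follows essentially the same route as the paper's proof: Eilenberg--Watts applied to $HQ$ to produce the bimodule $B$, the unit-induced map $u\colon B\to\Lambda$ whose image is the ideal $I$ with $Q(u)$ invertible, and the tensor-vanishing $X\otimes_{\Lambda}B\cong HQ(X)=0$ to force $XI=0$ and hence $\cX=\Mod(\Lambda/I)$. The only point you assert rather than verify is that $u$ is also a \emph{left} $\Lambda$-module homomorphism (the paper checks this via naturality of the unit against left-multiplication endomorphisms), but that verification is routine.
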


\begin{proof}
	Let $\alpha\colon Q\circ H\isoto\id_{\cC}$ be an isomorphism. Since $H\circ Q\colon\Mod\Lambda\to\Mod\Lambda$ is cocontinuous, the Eilenberg-Watts theorem implies that there is a $\Lambda$-$\Lambda$-bimodule $B$ over $R$ such that $H\circ Q\cong-\otimes_{\Lambda}B$. Let $\beta\colon -\otimes_{\Lambda}B\isoto H\circ Q$ be an isomorphism, and let $\psi\colon\Lambda\otimes_{\Lambda}B\isoto B$ be the canonical bimodule isomorphism. Define the right $\Lambda$-module homomorphism $\phi\colon B\to\Lambda$ as the composition
	\begin{equation*}
		\begin{tikzcd}
			B\ar[r,"\eta_{B}"] & SQ(B) & SQ(\Lambda\otimes_{\Lambda}B)\ar[l,"SQ(\psi)"',"\sim"]\ar[r,"SQ(\beta_{\Lambda})","\sim"'] & SQHQ(\Lambda)\ar[r,"S(\alpha_{Q(\Lambda)})","\sim"'] & SQ(\Lambda) & \Lambda\ar[l,"\eta_{\Lambda}"',"\sim"]
		\end{tikzcd}
	\end{equation*}
	where $\eta\colon\id\to SQ$ is the unit morphism, and $\eta_{\Lambda}$ is an isomorphism because this is a Gabriel-Popescu embedding. To observe that $\phi$ is a bimodule homomorphism, fix an element $\lambda\in\Lambda$ and let $m_{\Lambda}\colon\Lambda\to\Lambda$ and $m_{B}\colon B\to B$ be the multiplication of $\lambda$ from the left. Then we have the commutative diagram
	\begin{equation*}
		\begin{tikzcd}
			B\ar[d,"m_{B}"]\ar[r,"\eta_{B}"] & SQ(B)\ar[d,"SQ(m_{B})"] & SQ(\Lambda\otimes_{\Lambda}B)\ar[d,"SQ(m_{\Lambda}\otimes B)"]\ar[l,"SQ(\psi)"']\ar[r,"SQ(\beta_{\Lambda})"] & SQHQ(\Lambda)\ar[d,"SQHQ(m_{\Lambda})"]\ar[r,"S(\alpha_{Q(\Lambda)})"] & SQ(\Lambda)\ar[d,"SQ(m_{\Lambda})"] & \Lambda\ar[d,"m_{\Lambda}"]\ar[l,"\eta_{\Lambda}"'] \\
			B\ar[r,"\eta_{B}"'] & SQ(B) & SQ(\Lambda\otimes_{\Lambda}B)\ar[l,"SQ(\psi)"]\ar[r,"SQ(\beta_{\Lambda})"'] & SQHQ(\Lambda)\ar[r,"S(\alpha_{Q(\Lambda)})"'] & SQ(\Lambda) & \Lambda\ar[l,"\eta_{\Lambda}"]\rlap{.}
		\end{tikzcd}
	\end{equation*}
	Hence, $\phi$ is a bimodule homomorphism. So $I:=\Im\phi$ is a (two-sided) ideal of $\Lambda$.
	
	We show that the subcategory $\cX:=\Ker Q$ is equal to $\Mod(\Lambda/I)$. Since $Q(\eta_{B})\colon Q(B)\to QSQ(B)$ is an isomorphism, $Q(\phi)\colon Q(B)\to Q(\Lambda)$ is also an isomorphism. By the exactness of $Q$, the canonical morphism $Q(I)\isoto Q(\Lambda)$ is an isomorphism, and $Q(\Lambda/I)=0$. Hence $\Lambda/I\in\cX$, and thus $\Mod(\Lambda/I)\subset\cX$.
	
	Let $M\in\cX$. Then $M\otimes_{\Lambda}B\cong HQ(M)=0$. The canonical epimorphism $B\to\Im\phi=I$ induces an epimorphism $M\otimes_{\Lambda}B\to M\otimes_{\Lambda} I$, so $M\otimes_{\Lambda}I=0$. This implies that $MI=0$ and $M\in\Mod(\Lambda/I)$. Therefore $\cX\subset\Mod(\Lambda/I)$.
	
	Consequently, the localizing subcategory $\cX$ is equal to $\Mod(\Lambda/I)$, which is also closed under direct products in $\Mod\Lambda$. Therefore $\cX$ is a bilocalizing subcategory of $\Mod\Lambda$, and $\cC\cong(\Mod\Lambda)/\cX$ is a Roos category.
\end{proof}

\begin{theorem}\label{77807299}
	For an $R$-linear Grothendieck category $\cC$, the following are equivalent:
	\begin{enumerate}
		\item\label{65302422} $\cC$ is a Roos category (Ab6 and Ab4*).
		\item\label{80445270} $\cC$ is dualizable and $\cC^{*}$ is a Grothendieck category.
		\item\label{02597871} For every/some Gabriel-Popescu embedding $(\Mod\Lambda,Q,S)$ for $\cC$, the functor $Q$ has a left adjoint.
		\item\label{67470951} For every/some Gabriel-Popescu embedding $(\Mod\Lambda,Q,S)$ for $\cC$, there is a cocontinuous functor $H\colon\cC\to\Mod\Lambda$ such that $Q\circ H\cong\id_{\cC}$.
	\end{enumerate}
\end{theorem}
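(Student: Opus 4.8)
The plan is to assemble the theorem from the three preceding results---\cref{68013908}, \cref{85148201}, and \cref{72974304}---together with Roos's characterization \cref{09243093}, so that no new computation is required. Since every Grothendieck category admits at least one Gabriel-Popescu embedding (take any generator $U$ and set $\Lambda:=\End_{\cC}(U)$), the ``for every'' form of each of \cref{02597871} and \cref{67470951} trivially entails its ``for some'' form. Hence for these two conditions it suffices to prove the implication from \cref{65302422} in the ``for every'' direction and the implication to \cref{65302422} in the ``for some'' direction.

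First I would close a cycle through \cref{65302422}, \cref{80445270}, and \cref{67470951}. The step \cref{65302422} $\Rightarrow$ \cref{80445270} is \cref{68013908}: a Roos category is dualizable, and its dual is again a Roos category, hence in particular a Grothendieck category. The step \cref{80445270} $\Rightarrow$ \cref{67470951} is \cref{85148201}, which, for an arbitrary chosen embedding, manufactures a cocontinuous $H\colon\cC\to\Mod\Lambda$ with $Q\circ H\cong\id_{\cC}$; this delivers the ``for every'' form. Closing the cycle, \cref{67470951} $\Rightarrow$ \cref{65302422} is \cref{72974304}, applied to the single embedding supplied by the ``for some'' form. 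Thus \cref{65302422}, \cref{80445270}, and \cref{67470951} are equivalent, and the ``every/some'' distinction in \cref{67470951} evaporates.

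It remains to weave in \cref{02597871}, and for this I would appeal to Roos's theorem \cref{09243093} directly. Applied to any embedding, its equivalence \cref{02984504} $\Leftrightarrow$ \cref{42092340} asserts that $\cC$ satisfies Ab6 and Ab4* precisely when $Q$ admits a left adjoint for that embedding. Because the former is an \emph{intrinsic} property of $\cC$, independent of the embedding, this single equivalence yields both \cref{65302422} $\Rightarrow$ \cref{02597871} in the ``for every'' form and \cref{02597871} $\Rightarrow$ \cref{65302422} in the ``for some'' form, completing all four equivalences. As a consistency check one also sees \cref{02597871} $\Rightarrow$ \cref{67470951} directly: a left adjoint $H$ of $Q$ is cocontinuous, and by the concluding clause of \cref{09243093} it is fully faithful, whence the unit $\id_{\cC}\to Q\circ H$ is invertible.

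No genuine difficulty remains at this level, because all the substance has been absorbed into the lemmas. The step I expect to be the real obstacle---were one proving the statement without them---is \cref{85148201}, namely extracting a cocontinuous section $H$ of $Q$ from dualizability alone. That argument hinges on realizing the Deligne tensor product as a localization of a module category (\cref{34641739}), on the identification of internal functor categories with tensor products for dualizable objects (\cite[Lemma~3.1]{MR3361309}), and on comparing the two localizing subcategories $\cY\subset\cZ$ to conclude that the relevant canonical functor is dense. This density comparison is where the weight of the characterization is carried.
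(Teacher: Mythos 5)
Your proposal is correct and follows essentially the same route as the paper: it assembles the theorem from \cref{68013908}, \cref{85148201}, \cref{72974304}, and the equivalence \cref{02984504} $\Leftrightarrow$ \cref{42092340} of \cref{09243093}, exactly as the paper does. Your explicit handling of the ``every/some'' quantifiers (reducing to ``for every'' on the forward implications and ``for some'' on the converses, and noting that Ab6 and Ab4* are intrinsic to $\cC$) is slightly more careful than the paper's one-line citation, but it is the same argument.
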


\begin{proof}
	$\text{\cref{65302422}}\Rightarrow\text{\cref{80445270}}$ follows from \cref{68013908}. $\text{\cref{80445270}}\Rightarrow\text{\cref{65302422}}$ follows from \cref{85148201,72974304}.
	
	$\text{\cref{65302422}}\Leftrightarrow\text{\cref{02597871}}\Rightarrow\text{\cref{67470951}}$ is part of \cref{09243093}.
	
	$\text{\cref{67470951}}\Rightarrow\text{\cref{65302422}}$ is \cref{72974304}.
\end{proof}

\begin{remark}\label{79237878}
	Note that $\text{\cref{67470951}}\Rightarrow\text{\cref{02597871}}$ in \cref{77807299} does not hold for a cocontinuous functor $Q$ (which is not necessarily a Gabriel-Popescu quotient functor) in general. To see this, let $k$ be a field, and consider the functors
	\begin{equation*}
		Q:=-\otimes_{k[x]}(k[x]/(x))\colon\Mod k[x]\to\Mod k
	\end{equation*}
	and
	\begin{equation*}
		H:=-\otimes_{k}k[x]\colon\Mod k\to\Mod k[x].
	\end{equation*}
	Then $Q$ and $H$ are cocontinuous functors, and
	\begin{equation*}
		Q\circ H=-\otimes_{k}k[x]\otimes_{k[x]}(k[x]/(x))=-\otimes_{k}(k[x]/(x))\cong\id_{\Mod k}.
	\end{equation*}
	However, $Q$ does not have a left adjoint because $Q$ is not left exact (that is, $k[x]/(x)$ is not a flat $k[x]$-module).
\end{remark}

Stefanich \cite[Corollary~3.1.19]{arXiv:2307.16337} proved that every dualizable $R$-linear \emph{cocomplete} (not necessarily locally presentable) category $\cC$ is a Grothendieck category satisfying Ab4* (using $\cA=\Mod R$ as the symmetric monoidal Grothendieck category in the statement). Hence its dual $\cC^{*}$ is also a Grothendieck category because it is dualizable. Therefore, by \cref{77807299}, we obtain a complete characterization of dualizable linear cocomplete categories, confirming \cref{20938093}:

\begin{corollary}\label{04730941}
	Let $\cC$ be an $R$-linear cocomplete category. Then the following are equivalent:
	\begin{enumerate}
		\item $\cC$ is dualizable.
		\item $\cC$ is a Roos category.
	\end{enumerate}
\end{corollary}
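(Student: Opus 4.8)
The plan is to deduce the equivalence almost entirely from results already in place: \cref{77807299} settles the matter for Grothendieck categories, and Stefanich's \cite[Corollary~3.1.19]{arXiv:2307.16337} lets us upgrade the bare cocompleteness hypothesis to the full Grothendieck property. So the strategy is to reduce the cocomplete statement to the Grothendieck statement, in both directions.

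For the implication (2) $\Rightarrow$ (1), I would first observe that a Roos category is by definition a Grothendieck category, hence cocomplete, so the standing hypothesis is automatically met and nothing must be assumed separately. Then I would simply invoke \cref{68013908}, which already asserts that every $R$-linear Roos category is dualizable (this goes through the Gabriel--Popescu retraction together with \cite[Corollary~3.4]{MR3361309}). This direction is formal and presents no difficulty.

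The content lies in (1) $\Rightarrow$ (2). Here I would argue in two stages. First, apply Stefanich's theorem directly to $\cC$: since $\cC$ is a dualizable $R$-linear cocomplete category, it is in fact a Grothendieck category (satisfying Ab4*). To invoke \cref{77807299} I still need that the dual $\cC^{*}$ is a Grothendieck category, which is exactly the extra input required by condition \cref{80445270}. For this I would use the general fact that, in the symmetric monoidal framework of cocomplete $R$-linear categories under $\boxtimes_{R}$, the dual of a dualizable object is again dualizable (with canonical double-dual equivalence). Granting this, $\cC^{*}$ is itself a dualizable $R$-linear cocomplete category, so a second application of Stefanich's theorem shows $\cC^{*}$ is a Grothendieck category as well. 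Now $\cC$ is a dualizable Grothendieck category whose dual is a Grothendieck category, which is precisely condition \cref{80445270} of \cref{77807299}; that theorem then yields that $\cC$ is a Roos category.

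The main obstacle is not in this bridging argument, which is short and essentially formal, but in the external input it relies on: Stefanich's structural theorem that every dualizable cocomplete linear category is a Grothendieck category satisfying Ab4*. That result is exactly what permits us to drop the local presentability assumption from \cref{77807299} and replace it with plain cocompleteness. The only genuinely new point to check is the self-duality of the dual, so that Stefanich's theorem can be re-applied to $\cC^{*}$; but this is a standard consequence of the definition of dualizability in a symmetric monoidal setting and requires no serious work, being already implicit in the remark that $\cC^{*}$ is dualizable whenever $\cC$ is.
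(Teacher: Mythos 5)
Your proposal is correct and follows essentially the same route as the paper: both directions reduce to \cref{77807299} and \cref{68013908}, with Stefanich's \cite[Corollary~3.1.19]{arXiv:2307.16337} applied to $\cC$ and then to the (automatically dualizable) dual $\cC^{*}$ to verify that both are Grothendieck categories. Nothing further is needed.
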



\providecommand{\bysame}{\leavevmode\hbox to3em{\hrulefill}\thinspace}
\providecommand{\MR}{\relax\ifhmode\unskip\space\fi MR }
\providecommand{\MRhref}[2]{%
  \href{http://www.ams.org/mathscinet-getitem?mr=#1}{#2}
}
\providecommand{\href}[2]{#2}



\begin{thebibliography}{LRGS18}

\bibitem[BCJF15]{MR3361309}
Martin Brandenburg, Alexandru Chirvasitu, and Theo Johnson-Freyd,
  \emph{Reflexivity and dualizability in categorified linear algebra}, Theory
  Appl. Categ. \textbf{30} (2015), 808--835. \MR{3361309}

\bibitem[Chi22]{MR4366933}
Alexandru Chirvasitu, \emph{Higher dualizability and singly-generated
  {G}rothendieck categories}, Appl. Categ. Structures \textbf{30} (2022),
  no.~1, 1--12. \MR{4366933}

\bibitem[Gab62]{MR0232821}
Pierre Gabriel, \emph{Des cat{\'e}gories ab{\'e}liennes}, Bull. Soc. Math.
  France \textbf{90} (1962), 323--448. \MR{0232821}

\bibitem[Kan19]{MR4033823}
Ryo Kanda, \emph{Non-exactness of direct products of quasi-coherent sheaves},
  Doc. Math. \textbf{24} (2019), 2037--2056. \MR{4033823}

\bibitem[LRGS18]{MR3873542}
Wendy Lowen, Julia Ramos~Gonz\'{a}lez, and Boris Shoikhet, \emph{On the tensor
  product of linear sites and {G}rothendieck categories}, Int. Math. Res. Not.
  IMRN (2018), no.~21, 6698--6736. \MR{3873542}

\bibitem[Mar97]{MarinThesis}
Leandro Mar\'{\i}n, \emph{Categories of modules for idempotent rings and morita
  equivalences}, Master's thesis, University of Glasgow, 1997.

\bibitem[PG64]{MR0166241}
Nicolae Popesco and Pierre Gabriel, \emph{Caract{\'e}risation des
  cat{\'e}gories ab{\'e}liennes avec g{\'e}n{\'e}rateurs et limites inductives
  exactes}, C. R. Acad. Sci. Paris \textbf{258} (1964), 4188--4190. \MR{0166241}

\bibitem[Pop73]{MR0340375}
N.~Popescu, \emph{Abelian categories with applications to rings and modules},
  Academic Press, London, 1973, London Mathematical Society Monographs, No. 3.
  \MR{0340375}

\bibitem[Qui96]{QuillenPreprint}
Daniel Quillen, \emph{Module theory over nonunital rings}, Preprint (1996).

\bibitem[Roo65]{MR0190207}
Jan-Erik Roos, \emph{Caract{\'e}risation des cat{\'e}gories qui sont quotients
  de cat{\'e}gories de modules par des sous-cat{\'e}gories bilocalisantes}, C.
  R. Acad. Sci. Paris \textbf{261} (1965), 4954--4957. \MR{0190207}

\bibitem[Roo66]{MR0215895}
\bysame, \emph{Sur les foncteurs d{\'e}riv{\'e}s des produits infinis dans les
  cat{\'e}gories de {G}rothendieck. {E}xemples et contre-exemples}, C. R. Acad.
  Sci. Paris S{\'e}r. A-B \textbf{263} (1966), A895--A898. \MR{0215895}

\bibitem[Roo67]{MR0217145}
\bysame, \emph{Sur la condition {${\rm AB}$} {$6$} et ses variantes dans les
  cat{\'e}gories ab{\'e}liennes}, C. R. Acad. Sci. Paris S{\'e}r. A-B
  \textbf{264} (1967), A991--A994. \MR{0217145}

\bibitem[Roo06]{MR2197371}
\bysame, \emph{Derived functors of inverse limits revisited}, J. London Math.
  Soc. (2) \textbf{73} (2006), no.~1, 65--83. \MR{2197371}

\bibitem[Ros95]{MR1347919}
Alexander~L. Rosenberg, \emph{Noncommutative algebraic geometry and
  representations of quantized algebras}, Mathematics and its Applications,
  vol. 330, Kluwer Academic Publishers Group, Dordrecht, 1995. \MR{1347919}

\bibitem[{Ste}23]{arXiv:2307.16337}
Germ{\'a}n {Stefanich}, \emph{{Classification of fully dualizable linear
  categories}}, arXiv:2307.16337.

\bibitem[Wis91]{MR1144522}
Robert Wisbauer, \emph{Foundations of module and ring theory}, german ed.,
  Algebra, Logic and Applications, vol.~3, Gordon and Breach Science
  Publishers, Philadelphia, PA, 1991, A handbook for study and research.
  \MR{1144522}

\end{thebibliography}
\end{document}
